\numberwithin{equation}{section}
\newtheorem{theorem}{Theorem}[section]
\newtheorem{lemma}[theorem]{Lemma}
\theoremstyle{definition}
\newtheorem{definition}[theorem]{Definition}
\newtheorem{def-prop}[theorem]{Definition-Proposition}
\newtheorem{example}[theorem]{Example}
\newtheorem*{acknowledgement}{Acknowledgements}
\newtheorem{notation}[theorem]{Notation}
\newtheorem{question}[theorem]{Question}
\newtheorem{algorithm}[theorem]{Algorithm}
\DeclareMathOperator{\supp}{supp}
\DeclareMathOperator{\fideal}{FI}
\newcommand{\ZZ}{{\mathbb Z}}
\newcommand{\kk}{{\mathbbm k}}
\definecolor{orange}{rgb}{1,0.5,0}
\definecolor{violet}{rgb}{0.5,0,1}
\newcommand{\til}{\raise.17ex\hbox{$\scriptstyle\mathtt{\sim}$}}
\def\I{{\mathcal I}}
\def\M{{\mathcal M}}
\def\A{{\mathcal A}}
\def\F{{\mathcal F}}
\def\G{{\mathcal G}}
\def\UP{\mathcal {UP}}
\def\LP{\mathcal {LP}}
\def\1{{\bf 1}}
\def\0{{\bf 0}}
\begin{document}
	
\title{Density of $f$-ideals and $f$-ideals in mixed small degrees}

\author{Huy T\`ai H\`a}
\address{Tulane University \\ Department of Mathematics \\
	6823 St. Charles Ave. \\ New Orleans, LA 70118, USA}
\email{tha@tulane.edu}
%\urladdr{???}

\author{Graham Keiper}
\address{McMaster University \\ Department of Mathematics and Statistics \\
1280 Main Street West \\ Hamilton, Ontario, L8S 4K1, Canada}
\email{keipergt@mcmaster.ca}

\author{Hasan Mahmood}
\address{Government College University Lahore \\ Department of Mathematics \\
\\ Lahore, Punjab 54000, Pakistan}
\email{hasanmahmood@gcu.edu.pk}
%\urladdr{???}

\author{Jonathan L. O'Rourke}
\address{Tulane University \\ Department of Mathematics \\
	6823 St. Charles Ave. \\ New Orleans, LA 70118, USA}
\email{jorourk2@tulane.edu}
%\urladdr{???}

%\author{Adam Van Tuyl}
%\address{To fill}
%\email{vantuyl@math.mcmaster.ca}

\keywords{$f$-ideal, $f$-complex, simplicial complex, monomial ideal, $f$-vector}
\subjclass[2010]{13F55, 05E45, 05E40}

\begin{abstract}
A squarefree monomial ideal is called an $f$-ideal if its Stanley-Reisner and facet simplicial complexes have the same $f$-vector. We show that $f$-ideals generated in a fixed degree have asymptotic density zero when the number of variables goes to infinity. We also provide novel algorithms to construct $f$-ideals generated in small degrees.
\end{abstract}

\maketitle

%\noindent\textbf{Notation to use:}
%\begin{enumerate}
%	\item $\Delta_I$ for the Stanley-Reisner complex associated to $I$
%	\item $\Delta(I)$ for the facet complex associated to $I$ (\emph{any other suggestion?})
%	\item $I_\Delta$ for the Stanley-Reisner ideal of $\Delta$
%	\item $I(\Delta)$ for the facet ideal of $\Delta$
%	\item $f(\Delta)$ for the $f$-vector of $\Delta$.
%	\item use command \textbackslash{fideal} for the set of $f$-ideals.
%	\item use command \textbackslash{I} for the set of squarefree monomial ideals having the necessary number of generators.
%\end{enumerate}

%%%%%%%%%%%%%%%%%%%%%%%%%%%%%%%%%%%%%%%%%%%%%%%%%%%%

\section{Introduction} \label{sec.intro}

Let $\kk$ be a field and let $R = \kk[x_1, \dots, x_n]$ be a polynomial ring over $\kk$. For a squarefree monomial ideal $I \subseteq R$ one can construct the \emph{Stanley-Reisner complex} $\Delta_I$ (see \cite{StanleyBook}), whose minimal nonfaces correspond to the minimal generators of $I$, and the \emph{facet complex} $\Delta(I)$ (see \cite{faridi}), whose maximal faces correspond to the minimal generators of $I$. The ideal $I$ is called an \emph{$f$-ideal} if $\Delta_I$ and $\Delta(I)$ have the same \emph{$f$-vector}, which enumerates faces in each dimension. The notion of $f$-ideals was first introduced by Abbasi, Ahmad, Anwar and Baig \cite{deg2}. Properties of, characterizations for, and algorithms constructing $f$-ideals have been much investigated in recent years (cf. \cite{degd, Adam, tswu2, tswuPubl, tswuPub3, fgraph, fsimp, fnote}). Interestingly the concept of $f$-ideals has recently been generalized by \cite{qfi1, qfi2} through the introduction of \emph{quasi} $f$-ideals. In this paper we examine the asymptotic density of $f$-ideals as the number of variables becomes large in addition to providing new algorithms to construct $f$-ideals of small and mixed degrees.

It is known that $f$-ideals minimally generated in a single degree $d$ have exactly $\frac{1}{2}{n \choose d}$ minimal generators (see, for example, \cite[Theorem 3.3]{degd}). Let $\fideal(n,d)$ denote the collection of $f$-ideals in $R$ generated in degree $d$ and let $\I(n,d)$ denote the collection of squarefree monomial ideals in $R$ minimally generated by exactly $\frac{1}{2}{n \choose d}$ generators of degree $d$. We show that for a fixed degree $d$, $\fideal(n,d)$ has asymptotic density 0 in $\I(n,d)$ as $n \rightarrow \infty$. Specifically, we prove the following theorem.

\medskip

\noindent{\bf Theorem \ref{thm.density}.} For a fixed degree $d$, we have
$$\lim_{n \rightarrow \infty} \dfrac{\#\fideal(n,d)}{\#\I(n,d)} = 0.$$

\medskip

To prove Theorem \ref{thm.density}, we make use of a characterization of $f$-ideals in terms of \emph{lower perfect} and \emph{upper perfect} sets given in \cite[Theorem 2.3]{tswuPubl}. In particular, we show that ideals generated by lower perfect sets with the correct number of generators have asymptotic density 1 and ideals generated by upper perfect sets with the correct number of generators have asymptotic density 0 in $\I(n,d)$; see Theorem \ref{thm.ULP}.

In \cite{tswuPubl}, an algorithm to construct $f$-ideals generated in a single degree --- referred to as \emph{pure} $f$-ideals --- was given. However, as pointed out by our density result, Theorem \ref{thm.density}, pure $f$-ideals become rare as the dimension of the ring becomes large. In fact, even with the algorithm presented in \cite{tswuPubl}, it is not clear that for a given pair $(n,d)$, $f$-ideals of degree $d$ in $n$ variables exist; see, for example, \cite[Proposition 3.3]{tswu2} for existence results. Furthermore, until now there were no algorithms to construct $f$-ideals generated in various different degrees --- referred to as \emph{mixed} $f$-ideals. We shall provide some algorithms to construct mixed $f$-ideals generated in degree 2 and 3; see Algorithms \ref{23alg.odd1}, \ref{23alg.odd2}, \ref{23alg.even}, \ref{alg.odd3} and \ref{alg.odd2}. We shall also provide new algorithms to construct pure $f$-ideals in small degrees; particularly, in degrees 3, 4 and 5  see Algorithms \ref{alg.deg3}, \ref{alg.deg4} and \ref{alg.deg5}. Finally we note that by a result of Budd and Van Tuyl (Theorem 4.1  \cite{Adam}) all algorithms outlined in this paper provide us with $f$-ideals in degrees $n-d_{i}$ via the Newton complementary dual, where the $d_{i}$ are the degrees of the original $f$-ideal and $n$ is the number of variables in the ring.

\begin{acknowledgement}
The authors would like to thank Adam Van Tuyl who made contributions to this paper in its preliminary stage, and with whom we had many useful discussions. The first author is partially supported by Louisiana Board of Regents (grant \# LEQSF(2017-19)-ENH-TR-25). The third author is supported by the Higher Education Commission of Pakistan (No: $7515$$\slash$Punjab$\slash$NRPU$\slash$R$\&$D$\slash$HEC$\slash$2017). Part of this work was done while the third author visited Tulane University. The authors would like to thank Tulane University for its hospitality.
\end{acknowledgement}
%%%%%%%%%%%%%%%%%%%%%%%%%%%%%%%%%%%%%%%%%%%%%%%%%%%%

\section{Preliminaries} \label{sec.prel}

In this section, we introduce important notation and terminology that will be used in the paper. For a more extensive exposition we direct the reader to \cite{StanleyBook}.

% Stanley-Reisner ideals and complexes

Throughout the paper, $\Delta$ denotes a \emph{simplicial complex} over the vertex set $V = \{x_1, \dots, x_n\}$. That is, $\Delta$ consists of nonempty subsets of $V$ with the following properties:
\begin{enumerate}
	\item[$(i)$] $\{x_i\} \in \Delta$ for all $i = 1, \dots, n$, and
	\item[$(ii)$] if $F \in \Delta$ and $G \subseteq F$ then $G \in \Delta$.
\end{enumerate}
Elements in $\Delta$ are called \emph{faces} of $\Delta$. Maximal faces of $\Delta$ are called \emph{facets}. The set of facets of $\Delta$ is denoted by $\F(\Delta)$. Obviously, $\Delta$ is completely determined by its facets.

For a face $F \in \Delta$, its \emph{dimension} is $\dim F := \#F - 1$, where $\#F$ denotes the cardinality of $F$. The \emph{dimension} of $\Delta$ is given by
$$\dim \Delta = \max \{\dim F ~\big|~ F \in \Delta\}.$$

% Facet ideals and complexes

Let $\kk$ be a field and identify the vertices in $V$ with the variables in the polynomial ring $R = \kk[x_1, \dots, x_n]$. The following constructions, see \cite{StanleyBook, faridi}, give one-to-one correspondences between simplicial complexes over $V$ and squarefree monomial ideals (generated in degrees at least 2) in $R$.

\begin{definition}
	Let $\Delta$ be a simplicial complex over $V = \{x_1, \dots, x_n\}$.
	\begin{enumerate}
		\item The \emph{Stanley-Reisner ideal} of $\Delta$ is defined to be
		$$I_\Delta = \langle x_{i_1} \dots x_{i_r} ~\big|~ \{x_{i_1}, \dots, x_{i_r}\} \not\in \Delta\rangle \subseteq R.$$
		\item The \emph{facet ideal} of $\Delta$ is defined to be
		$$I(\Delta) = \langle x_{j_1} \dots x_{j_s} ~\big|~ \{x_{j_1}, \dots, x_{j_s}\}\in\F(\Delta) \rangle \subseteq R.$$
	\end{enumerate}
\end{definition}

For a squarefree monomial ideal $I \subseteq R$ (generated in degrees at least 2), the dual constructions give us the \emph{Stanley-Reisner complex} $\Delta_I$ and the \emph{facet complex} $\Delta(I)$ associated to $I$. It is easy to see that
$$I_{\Delta_I} = I = I(\Delta(I)) \text{ and } \Delta_{I_\Delta} = \Delta = \Delta(I(\Delta)).$$

% f-vectors, f-ideals and f-complexes

The main objects of our study are \emph{$f$-ideals}, which are characterized by the \emph{$f$-vectors} of corresponding simplicial complexes. We now recall the definition of $f$-vectors of simplicial complexes and $f$-ideals.

\begin{definition}
	Let $\Delta$ be a simplicial complex of dimension $d$. The \emph{$f$-vector} of $\Delta$ is a sequence of integers $f(\Delta) = (f_{-1}, f_0, \dots, f_d)$, where $f_i$ is the number of $i$-dimensional faces in $\Delta$ for any $i$ (with the convention that $f_{-1} = 1$ unless $\Delta$ is the empty complex).
\end{definition}

% f-ideals

\begin{definition}
	A squarefree monomial ideal $I \subseteq R$ generated in degrees at least 2 is called an \emph{$f$-ideal} if its Stanley-Reisner complex $\Delta_I$ and its facet complex $\Delta(I)$ have the same $f$-vectors. i.e. $f(\Delta_I)=f(\Delta(I))$.
\end{definition}

An $f$-ideal $I$ is called \emph{pure} if it is \emph{equi-generated}; that is, its minimal generators are all of the same degree. Equivalently, an $f$-ideal $I$ is pure if and only if its Stanley-Reisner and facet complexes are pure simplicial complexes. An $f$-ideal that is not pure is called a \emph{mixed} $f$-ideal.

Pure $f$-ideals can be characterized by the following property of being a \emph{perfect} set of the collection of minimal generators.

% lower perfect, upper perfect sets and pure $f$-ideals

\begin{definition} Let $\M_{n,d}$ denote the collection of squarefree monomials of degree $d$ in $R = \kk[x_1, \dots, x_n]$.
	\begin{enumerate}
		\item A subset $\A \subseteq \M_{n,d}$ is called \emph{upper perfect} if
		$$\left\{fx_i ~\big|~ f \in \A, x_i \nmid f, 1 \le i \le n\right\} = \M_{n,d+1}.$$
		\item A subset $\A \subseteq \M_{n,d}$ is called \emph{lower perfect} if
		$$\left\{ \dfrac{g}{x_i} ~\Big|~ g \in \A, x_i \mid g, 1 \le i \le n\right\} = \M_{n,d-1}.$$
		\item A subset $\A \subseteq \M_{n,d}$ is called \emph{perfect} if it is both upper perfect and lower perfect.
	\end{enumerate}
\end{definition}

\begin{theorem}[\protect{\cite[Theorem 2.3]{tswuPubl}}]
	\label{thm.PerfectCharacterization}
	Let $I \subseteq R$ be a squarefree monomial ideal generated by $\frac{1}{2}{n \choose d}$ generators of degree $d \ge 2$. Let $G(I)$ be the set of minimal generators of $I$. Then $I$ is a pure $f$-ideal if and only if $G(I)$ is a perfect set.
\end{theorem}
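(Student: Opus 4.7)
The plan is to compute the $f$-vectors of $\Delta_I$ and $\Delta(I)$ explicitly, using the hypothesis that $I$ is equi-generated in degree $d$, then match them dimension by dimension to see exactly which combinatorial conditions on $G(I)$ are forced by the equality $f(\Delta_I) = f(\Delta(I))$.

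First I would observe that since every minimal generator has degree $d$, the Stanley--Reisner complex $\Delta_I$ contains every subset of $V=\{x_1,\dots,x_n\}$ of size at most $d-1$, so $f_i(\Delta_I) = \binom{n}{i+1}$ for $-1\le i \le d-2$ and $f_{d-1}(\Delta_I) = \binom{n}{d} - |G(I)|$. Symmetrically, since $\Delta(I)$ is pure of dimension $d-1$, we have $f_{d-1}(\Delta(I)) = |G(I)|$, $f_i(\Delta(I)) = 0$ for $i \ge d$, and for $i \le d-2$, $f_i(\Delta(I))$ equals the number of $(i+1)$-subsets of $V$ that lie in some facet. Because $|G(I)| = \tfrac{1}{2}\binom{n}{d}$, the identity at dimension $d-1$ is automatic, and the content of the equality $f(\Delta_I) = f(\Delta(I))$ is confined to the two ranges $i \le d-2$ and $i \ge d$.

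In the high range $i \ge d$, matching forces $f_i(\Delta_I) = 0$, i.e.\ every $(i+1)$-subset of $V$ contains some $g \in G(I)$. A superset argument collapses this whole family to the single case $i = d$: if every $(d+1)$-subset contains a generator, then for any larger subset an arbitrary $(d+1)$-slice already does. Rephrased in monomial language, every squarefree monomial of degree $d+1$ has the form $fx_i$ with $f \in G(I)$ and $x_i \nmid f$, which is precisely upper perfectness of $G(I)$. Dually, in the low range $i \le d-2$, matching forces $f_i(\Delta(I)) = \binom{n}{i+1}$, i.e.\ every $(i+1)$-subset lies in some facet; extending each such subset to a $(d-1)$-subset reduces this to the single case $i = d-2$, namely that every $(d-1)$-subset is contained in a generator. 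In monomial language this says every squarefree degree-$(d-1)$ monomial has the form $g/x_i$ with $g \in G(I)$ and $x_i \mid g$, which is precisely lower perfectness of $G(I)$.

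Putting the two ranges together, $f(\Delta_I) = f(\Delta(I))$ holds if and only if $G(I)$ is simultaneously upper and lower perfect, i.e.\ perfect by definition. I expect the only delicate step to be the two reductions that collapse the infinite families of dimensional equalities to the boundary cases $i=d$ and $i=d-2$; both are essentially a one-line subset-slicing or superset-extension argument, after which the rest of the proof is just a translation between the face/non-face descriptions of the two complexes and the divisibility descriptions of the perfect conditions.
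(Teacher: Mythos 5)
Your argument is correct: the dimension-by-dimension computation of both $f$-vectors, the automatic match in dimension $d-1$ coming from the hypothesis $\#G(I)=\tfrac12\binom{n}{d}$, and the collapse of the conditions for $i\ge d$ and $i\le d-2$ to the boundary cases $i=d$ (upper perfectness) and $i=d-2$ (lower perfectness) together give a complete proof. Note that the paper does not prove this statement itself --- it is imported from \cite[Theorem 2.3]{tswuPubl} --- and your proof is essentially the standard $f$-vector comparison underlying that reference, so there is nothing to object to beyond the implicit nondegeneracy assumption $n\ge d$.
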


We shall let $\LP(n,d)$ and $\UP(n,d)$ be the collections of lower perfect and upper perfect sets in $\M_{n,d}$, respectively. Furthermore, let $\I(n,d)$ be the collection of squarefree monomial ideals in $R$ generated in degree $d$ with exactly $\frac{1}{2}{n \choose d}$ generators, and let $\fideal(n,d)$ be the collection of pure $f$-ideals in $R$ generated in degree $d$. We follow the convention of using a subscript $d$, e.g, $I_d$, to denote the $d$-th graded component of an ideal $I$. \\

% Terminology from Adam and Sam's paper

A useful framework for testing whether an arbitrary squarefree monomial ideal is an $f$-ideal was formulated by Budd and Van Tuyl \cite{Adam} which we quickly recall for later use.

\begin{notation}[\protect{See \cite{Adam}}]\label{abcd}
For a squarefree monomial $I\subset R$ we define the following partition of squarefree monomials in a given degree $d$:
\begin{align*}
A_{d}(I)&=\{M\in \M_{n,d} ~\big|~ M\notin I_{d}\text{ and $M$ does not divide any element of $G(I)$}\} \\
B_{d}(I)&=\{M\in \M_{n,d} ~\big|~ M\notin I_{d} \text{ and $M$ divides some element of $G(I)$}\}\\
C_{d}(I)&=\{M\in \M_{n,d} ~\big|~  M\in G(I)\} \\
D_{d}(I)&=\{M\in \M_{n,d} ~\big|~ M\in I_{d}\backslash G(I)\}
\end{align*}
\end{notation}
Occasionally, we may refer to elements of type $A$ as \emph{nonfaces} and those of type $C$ as \emph{generators} of $I$ for obvious reasons.

\begin{lemma}[\protect{\cite[Lemma 2.5]{Adam}}]\label{abcd.lem}
Let $I\subseteq \kk[x_{1},\dots ,x_{n}]$ be a squarefree monomial ideal. Then $I$ is an f-ideal if and only if $\#A_{d}(I)=\#C_{d}(I)$ for all $0\le d\le n$.
\end{lemma}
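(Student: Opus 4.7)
The plan is to unpack the definitions of the Stanley--Reisner complex $\Delta_I$ and the facet complex $\Delta(I)$ in terms of the partition $(A_d(I), B_d(I), C_d(I), D_d(I))$ of $\M_{n,d}$, then compare the two $f$-vectors dimension by dimension.

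First, I identify a squarefree monomial $M \in \M_{n,d}$ with the vertex subset $\supp(M) \subseteq V$ of cardinality $d$, a candidate $(d-1)$-dimensional face. By the definition of $\Delta_I$, the set $\supp(M)$ is a face of $\Delta_I$ if and only if $M \notin I_d$. Splitting the complement of $I_d$ inside $\M_{n,d}$ according to whether $M$ divides some minimal generator of $I$, this gives
$$f_{d-1}(\Delta_I) = \#A_d(I) + \#B_d(I).$$

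Next, I plan to show $f_{d-1}(\Delta(I)) = \#B_d(I) + \#C_d(I)$. A $(d-1)$-dimensional face of $\Delta(I)$ corresponds to a monomial $M \in \M_{n,d}$ that divides some minimal generator: elements of $C_d(I)$ divide themselves, and elements of $B_d(I)$ divide a generator by definition. The key step is ruling out types $A$ and $D$. The $A$-case is immediate from the definition, while for $M \in D_d(I)$ there is some minimal generator $g$ with $\deg g < d$ dividing $M$; if $M$ in turn divided a generator $g'$, we would have $g \mid g'$, forcing $g = g'$ by minimality---contradicting $\deg g < d \le \deg g'$.

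Combining these two counts yields $f_{d-1}(\Delta_I) = f_{d-1}(\Delta(I))$ if and only if $\#A_d(I) = \#C_d(I)$, and imposing this across all $d$ gives the lemma. The only subtle point is the minimality argument ruling out type-$D$ monomials as divisors of minimal generators; the rest is bookkeeping on the four-part partition.
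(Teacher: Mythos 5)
Your argument is correct, and it is essentially the standard proof (the one given for the cited Lemma 2.5 of Budd--Van Tuyl, which this paper quotes without reproving): the partition $A_d, B_d, C_d, D_d$ is designed exactly so that $f_{d-1}(\Delta_I)=\#A_d+\#B_d$ and $f_{d-1}(\Delta(I))=\#B_d+\#C_d$, whence the equivalence. Your minimality argument excluding type-$D$ monomials from $\Delta(I)$ is the right (and only nontrivial) step.
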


%%%%%%%%%%%%%%%%%%%%%%%%%%%%%%%%%%%%%%%%%%%%%%%%%%%%

\section{Density of $f$-ideals} \label{sec.density}

In this section, we examine the density of pure $f$-ideals among ideals in $n$ variables, equi-generated in degree $d$ with $\frac{1}{2}{n \choose d}$ generators. For simplicity of notation, throughout the section, we shall let $m = \frac{1}{2}{n \choose d}$. Our first main result reads as follows.

\begin{theorem} \label{thm.density}
	For a fixed integer $d \ge 2$, we have
	$$\lim_{n \to \infty} \dfrac{\#\fideal(n,d)}{\#\I(n,d)} = 0.$$
\end{theorem}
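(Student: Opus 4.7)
The plan is to use Theorem~\ref{thm.PerfectCharacterization} to identify $\fideal(n,d)$ with the collection of perfect $m$-element subsets of $\M_{n,d}$, where $m = \tfrac{1}{2}\binom{n}{d}$. Since every perfect set is in particular upper perfect,
\[
\dfrac{\#\fideal(n,d)}{\#\I(n,d)} \le \PP\!\left[\A \in \UP(n,d)\right],
\]
where $\A$ is a uniformly random $m$-element subset of $\M_{n,d}$. It therefore suffices to prove that this probability tends to $0$ as $n \to \infty$.

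For each $g \in \M_{n,d+1}$, let $X_g$ be the indicator of the event that none of the $d+1$ degree-$d$ squarefree divisors of $g$ lies in $\A$, and set $X = \sum_{g \in \M_{n,d+1}} X_g$. Then $\A$ is upper perfect exactly when $X = 0$. A direct hypergeometric computation gives
\[
\mathbb{E}[X_g] = \dfrac{\binom{\binom{n}{d} - (d+1)}{m}}{\binom{\binom{n}{d}}{m}} \longrightarrow \left(\tfrac{1}{2}\right)^{d+1}
\]
as $n \to \infty$, so $\mathbb{E}[X] \sim c_d \binom{n}{d+1}$ for a positive constant $c_d$. By Chebyshev's inequality, $\PP[X=0] \le \mathrm{Var}(X)/\mathbb{E}[X]^2$, and the argument reduces to an upper bound on $\mathrm{Var}(X)$.

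To control the variance, I would split pairs $(g, g')$ of distinct elements of $\M_{n,d+1}$ according to $k := |g \cap g'|$. When $k \le d-1$ the divisor sets in $\M_{n,d}$ are disjoint, and the log-concavity of the map $k \mapsto \binom{\binom{n}{d}-k}{m}$ implies that the events $\{X_g = 1\}, \{X_{g'} = 1\}$ are negatively correlated under uniform $m$-subset sampling, so these pairs contribute nonpositively to the covariance sum. When $k = d$ the divisor sets of $g$ and $g'$ overlap in exactly one monomial; an explicit binomial-coefficient computation yields $\mathrm{Cov}(X_g, X_{g'}) = O(1)$, and a counting argument bounds the number of such pairs by $\binom{n}{d}\binom{n-d}{2} = O(n^{d+2})$. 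Combining these estimates gives $\mathrm{Var}(X) = O(n^{d+2})$ while $\mathbb{E}[X]^2 = \Theta(n^{2d+2})$, so Chebyshev yields $\PP[X=0] = O(n^{-d}) \to 0$.

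The main technical obstacle will be keeping the variance bookkeeping tight enough---in particular, confirming the negative-correlation claim in the disjoint case (for which the log-concavity route above seems cleanest) and extracting the correct leading constants for the $k = d$ covariances from the closed-form hypergeometric probabilities. Both steps are elementary, but they must be executed carefully so that the contributions from overlapping pairs do not inflate the $O(n^{d+2})$ bound and wipe out the gain coming from the denominator $\mathbb{E}[X]^2$.
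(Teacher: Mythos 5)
Your proposal is correct, and after the shared first step (using Theorem~\ref{thm.PerfectCharacterization} to reduce the problem to showing that upper perfect $m$-subsets of $\M_{n,d}$ are asymptotically rare), it diverges from the paper's argument. The paper does not compute a variance: it selects $l \approx n/(d+1)$ monomials $u_1,\dots,u_l \in \M_{n,d+1}$ with pairwise disjoint supports, observes that an upper perfect set must contain at least one degree-$d$ divisor of each $u_p$, and bounds the probability of this event by $\bigl(1-\theta^{d+1}\bigr)^l$ via a sequential conditioning estimate (each divisor is at least as likely to be absent once other divisors are conditioned to be absent). You instead run the full second-moment method on $X=\sum_{g\in\M_{n,d+1}}X_g$: your hypergeometric computation of $\mathbb{E}[X_g]\to 2^{-(d+1)}$ is right, the negative correlation for pairs with $|\supp(g)\cap\supp(g')|\le d-1$ does follow from the log-concavity of $k\mapsto\binom{2m-k}{m}$ (equivalently, $\frac{m-i}{2m-i}$ is decreasing in $i$), the count $\binom{n}{d}\binom{n-d}{2}=O(n^{d+2})$ of the remaining pairs is correct, and the trivial bound $|\mathrm{Cov}(X_g,X_{g'})|\le 1$ already suffices there, so $\mathrm{Var}(X)=O(n^{d+2})$ against $\mathbb{E}[X]^2=\Theta(n^{2d+2})$ closes the argument. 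The trade-off: your route requires the covariance bookkeeping but yields a quantitative polynomial rate $O(n^{-d})$ on the density of upper perfect sets among all of $\I(n,d)$; the paper's route avoids second moments entirely and in fact gives an exponentially small bound $\bigl(1-\theta^{d+1}\bigr)^{\Theta(n)}$, at the cost of discarding all but a sparse disjoint subfamily of the constraints. Note also that the paper's Theorem~\ref{thm.ULP} additionally proves the complementary density-one statement for lower perfect sets, which your argument does not address but which is not needed for Theorem~\ref{thm.density} itself.
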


In light of Theorem \ref{thm.PerfectCharacterization}, Theorem \ref{thm.density} is an immediate consequence of the following result on the asymptotic density of lower perfect and upper perfect sets.

\begin{theorem} \label{thm.ULP}
For a fixed integer $d \ge 2$, we have
$$\lim_{n \to \infty} \dfrac{\#\left(\LP(n,d)\cap\I(n,d)\right)}{\#\I(n,d)} = 1 \quad \text{ and } \quad \lim_{n \to \infty} \dfrac{\#\left(\UP(n,d)\cap\I(n,d)\right)}{\#\I(n,d)} = 0.$$
\end{theorem}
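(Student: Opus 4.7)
The plan is to view the ratio $\#(\cdot \cap \I(n,d))/\#\I(n,d)$ as a probability: sample a subset $\A \subseteq \M_{n,d}$ of size $m = \tfrac12\binom{n}{d}$ uniformly, and estimate the chance that $\A$ is lower (resp.\ upper) perfect. Writing $N = \binom{n}{d}$, the basic elementary estimate is that for any $S \subseteq \M_{n,d}$ with $\#S = s$,
$$\Pr[\A \cap S = \emptyset] \;=\; \frac{\binom{N - s}{m}}{\binom{N}{m}} \;=\; \prod_{i=0}^{s-1}\frac{N/2 - i}{N - i},$$
each factor of which is bounded above by $1/2$, and which tends to $(1/2)^s$ when $s$ is held fixed and $n \to \infty$.

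For the lower perfect claim I would apply a first moment (union bound) argument. Each $h \in \M_{n,d-1}$ divides exactly $n - d + 1$ elements of $\M_{n,d}$ (the monomials $h x_i$ with $x_i \nmid h$), so by the estimate above the probability that $h$ fails to divide some element of $\A$ is at most $(1/2)^{n-d+1}$. Summing over the $\binom{n}{d-1}$ choices of $h$ bounds $\Pr[\A \notin \LP(n,d)]$ by a polynomial in $n$ times an exponentially small quantity, which tends to $0$.

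For the upper perfect claim the first moment is not enough: for each $M \in \M_{n,d+1}$, the set $T_M$ of degree-$d$ divisors of $M$ has size only $d+1$, and the probability that $\A$ misses all of $T_M$ converges to the positive constant $(1/2)^{d+1}$. I would therefore apply the second moment method to the count
$$X \;=\; \sum_{M \in \M_{n,d+1}} \1\bigl[\A \cap T_M = \emptyset\bigr].$$
Since $E[X] \sim (1/2)^{d+1}\binom{n}{d+1} \to \infty$, Chebyshev's inequality would give $\Pr[\A \in \UP(n,d)] = \Pr[X = 0] \to 0$ provided one can show $\mathrm{Var}(X) = o(E[X]^2)$.

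The main obstacle, and the technical heart of the argument, is the covariance estimate. A quick support argument shows that for distinct $M, M' \in \M_{n,d+1}$ one has $\#(T_M \cap T_{M'}) \in \{0,1\}$, with the value $1$ occurring precisely when $\#(\supp(M)\cap\supp(M')) = d$; there are $\Theta(n^{d+2})$ such ordered pairs, each with $O(1)$ covariance and total contribution $O(n^{d+2})$ to $\mathrm{Var}(X)$. For the remaining ``nearly independent'' pairs the ratio $\binom{N-2(d+1)}{m}\binom{N}{m}/\binom{N-(d+1)}{m}^2$ expands to $1 + O(1/N)$, making each such covariance an $O(1/N)$-factor of $\Pr[\A\cap T_M = \emptyset]^2$ and summing to $O(E[X]^2/N)$. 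Since $E[X]^2 = \Theta(n^{2d+2})$ and $N = \Theta(n^d)$, both contributions are $o(E[X]^2)$ whenever $d \ge 2$, at which point the second moment method concludes. The fiddly part is the $1 + O(1/N)$ Taylor expansion of the hypergeometric ratio, but it is a routine calculation.
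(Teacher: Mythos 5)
Your lower perfect argument is essentially identical to the paper's: both are union bounds over $h \in \M_{n,d-1}$, noting that each $h$ has $n-d+1$ degree-$d$ multiples and the chance $\A$ avoids all of them is at most $(1/2)^{n-d+1}$, which beats the polynomial factor $\binom{n}{d-1}$.

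For the upper perfect density, however, you take a genuinely different route. The paper avoids any variance computation: it picks $l = \lfloor n/(d+1) \rfloor$ monomials $u_1,\dots,u_l \in \M_{n,d+1}$ with pairwise \emph{disjoint} supports, so their sets of degree-$d$ divisors $\mathcal{B}_p$ are disjoint, and then proves a worst-case conditioning bound: uniformly over the status of the other relevant divisors, the conditional probability that a fixed divisor of $u_p$ lies outside $\A$ is at least $\frac{m-n}{2m-n}$, which exceeds any $\theta < 1/2$ for $n \gg 0$. Hence $\Pr[\A \text{ covers all } u_p] \le (1 - \theta^{d+1})^l \to 0$. Your second-moment argument is also correct and complete as sketched: $\#(T_M \cap T_{M'}) \le 1$ for distinct $M,M'$; the $\Theta(n^{d+2})$ overlapping pairs contribute $O(n^{d+2})$; for disjoint pairs the hypergeometric ratio expands to $1 + O(1/N)$ (in fact $1 - (d+1)^2/N + O(1/N^2)$, so the covariance is even negative there), contributing $O(E[X]^2/N) = O(n^{d+2})$; and since $E[X]^2 = \Theta(n^{2d+2})$, Chebyshev gives $\Pr[X=0] = O(n^{-d}) \to 0$. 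The trade-off is real: the paper's argument is more elementary (a product bound, no variance) and actually yields exponential decay in $n$, at the cost of hand-picking the disjoint test family $\mathcal{U}$; your argument is heavier, systematic machinery that needs no clever choice of witnesses but only delivers polynomial decay $O(n^{-d})$. Both establish the stated limit.
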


\begin{proof} For a monomial $g \in \M_{n,d-1}$, let $L_g \subset \I(n,d)$ denote the set of ideals which fail to be lower perfect because $g$ divides no generator of the ideal. In other words, $$L_g := \{I \in \I(n,d) ~\big|~ g\nmid f \text{ for all } f \in \G(I)\}.$$
Then $\LP(n,d)\cap\I(n,d) = \I(n,d) \setminus \bigcup_{g \in \M_{n,d-1}} L_g$, and

\begin{equation} \notag
\begin{aligned}
\#\left(\LP(n,d)\cap\I(n,d)\right) & = & & \#\I(n,d) - \#\bigcup_{g \in \M_{n,d-1}} L_g \\
& \geq & & \#\I(n,d) - \sum_{g \in \M_{n,d-1}} \#L_g.
\end{aligned}
\end{equation}

It can be seen that an ideal is in $L_g$ if and only if it does include any of the monomials $x_j g$ for $j \not\in \supp(g)$ as generators. Equivalently, its $m$ generators are chosen from among the $2m - (n - (d - 1))$ possibilities not of the form $x_j g$. This implies that $\# L_g = {2m - n + d - 1 \choose m}$. Moreover, $\#\M_{n,d-1} = {n \choose d-1}$, so we have $$\sum_{g \in \M_{n,d-1}} \#L_g = {n \choose d-1}{2m - n + d - 1 \choose m}.$$
Therefore,
\begin{equation} \notag
\begin{aligned}
\lim_{n \to \infty} \dfrac{\#\left(\LP(n,d)\cap\I(n,d)\right)}{\#\I(n,d)}
& = & & 1 - \lim_{n \to \infty} \dfrac{{n \choose d-1} \cdot {2m - n + d - 1 \choose m}}{{2m \choose m}} \\
& = & & 1 - \lim_{n \to \infty} {n \choose d-1} \prod_{i=0}^{n-d} \dfrac{m-i}{2m-i} \\
& \ge & & 1 - \lim_{n \to \infty} {n \choose d-1} \left( \dfrac{m}{2m} \right)^{n - d + 1} \\
& = & & 1 - \lim_{n \to \infty} {n \choose d-1} \dfrac{1}{2^{n-d+1}} \\
& = & & 1.
\end{aligned}
\end{equation}
Here, the last equality follows from the fact that ${n \choose d-1} \le n^{d-1}$, which grows much slower than $2^{n-d+1}$ as $n \rightarrow \infty$. This establishes the first desired limit.

To prove the second limit, fix arbitrary constants $0 < \epsilon < 1$ and $0 < \theta < \dfrac{1}{2}$. Clearly, ${\displaystyle \lim_{n \rightarrow \infty}\dfrac{m-n}{2m-n} = \dfrac{1}{2}}$ (recall $m=\frac{1}{2}\binom{n}{d}$). Thus, $\dfrac{m-n}{2m-n} > \theta$ for $n \gg 0$. Furthermore, if we write $n = (d+1)l + r$, for $0 \le r < d+1$, then for $n \gg 0$ we also have
$$l > \dfrac{\ln \epsilon}{\ln(1-\theta^{d+1})}, \text{ i.e., } \left(1-\theta^{d+1}\right)^l < \epsilon.$$

Let \[\mathcal{U}=\{u_{1}=x_{1}x_{2}\dots x_{d+1}, u_{2}=x_{d+2}x_{d+3}\dots x_{2d+2}, \dots, u_{l}=x_{(l-1)(d+1)+1}\dots x_{l(d+1)}\}\] \[\mathcal{B}_{p}=\left\{f_{p,q+1} = \dfrac{u_p}{x_{p(d+1)-q}}\;\middle|\;0 \le q \le d\right\} \:\:\: \text{for  $1 \le p \le l$} \] Consider an arbitrary subset $\A \subset \M_{n,d}$ consisting of $m$ monomials. We say that $\A$ \emph{covers} $u_p$ if $\A\cap\mathcal{B}_{p}\ne\emptyset$. We say that $\A$ \emph{covers} $\mathcal{U}$ if $\A\cap\mathcal{B}_{p}\ne\emptyset$ for $1\le p\le l$. It is clear to see that if $\A$ is an upper perfect set then $\A$ must cover $\mathcal{U}$. (However, this is not an if and only if statement.)

We would like to find an upper bound on the probability that $\A$ covers $\mathcal{U}$ and show that this goes to zero as $n \rightarrow \infty$. We do this by finding an upper bound on the probability $\A$ covers $u_{p}$. Note that the probabilities of the $u_{p}$ being covered by $\A$ are not independent of each other. For a single $f_{p,q+1}$ with no other assumptions the probability it belongs to $\A$ is $\frac{\# \A}{\#\M_{n,d}}$. If we know whether or not $f_{1,1}$ is in $\A$ then the probability that $f_{p,q+1}$  belongs to $\A$ is $\frac{\#\A -1}{\#\M_{n,d}-1}$ if $f_{1,1}\in\A$ or $\frac{\#\A}{\#\M_{n,d}-1}$ if $f_{1,1}\notin\A$. We see then that an upper bound will be given by the case were we assume all other factors of all of the $u_{i}$ are not in $\A$. This gives $\frac{m}{2m-((d+1)l-1)}$ which we can for convenience bound above by $\frac{m}{2m-n}$.

Thus, the probability that $f_{p,q+1}\notin\A$ is bounded below in all cases (that is regardless of whether any other such factors belong to $\A$) by $1 - \dfrac{m}{2m-n}$. It follows that the probability that $\A$ covers $u_p$ is bounded above by
$$1 - \left(1 - \dfrac{m}{2m-n}\right)^{d+1} = 1 - \left(\dfrac{m-n}{2m-n}\right)^{d+1}.$$

In the end, we deduce that the probability that $\A$ covers $u_1, \dots, u_l$ is bounded above by
$$\left[1 - \left(\dfrac{m-n}{2m-n}\right)^{d+1}\right]^l < \left(1 - \theta^{d+1}\right)^l < \epsilon.$$
This inequality is true for any $0 < \epsilon < 1$. The assertion for the asymptotic density of upper perfect sets then follows.
\end{proof}

Theorem \ref{thm.density} verifies the expected behavior that $f$-ideals are rare. It is naturally desirable to construct explicit $f$-ideals. This is the content of our next section.

%%%%%%%%%%%%%%
%%%%%%%%%%%%%%%%%%%%%%%%%%%%%%%%%%%%%%%%%%%%%%%%%%5

\section{Mixed $f$-ideals in degrees 2 and 3} \label{sec.mixed}

In this section, we provide algorithms to construct mixed $f$-ideals generated in degrees 2 and 3. Particularly, it is shown that mixed $f$-ideals generated in degrees 2 and 3 always exist if the number of variables $n$ is at least 6 when $n$ is even and when $n$ is at least 31 in the case that $n$ is odd (in contrast the case of pure f-ideals in degree $d$ where $\binom{n}{d}$ must be even by Theorem \ref{thm.PerfectCharacterization}).

\subsection{Odd number of variables.} We shall first consider the case when the number of variables, n, is odd and at least 7. We provide two algorithm's to construct $f$-ideals, one for cases where $n\equiv 1,7$ (mod 8) and the other for $n \equiv 3,5$ (mod 8).
\begin{algorithm}
	\label{23alg.odd1}
	Let $n = 2k+1$ for $k \ge 3$, $k\equiv 0\ (\text{mod }4)$ or $k\equiv 3\ (\text{mod } 4)$. Let $R = \kk[x_1, \dots, x_n]$. We construct an ideal $I\subset R$ as follows:
	\begin{enumerate}[align=left]
		\item[\textsc{Step 1:}] Set $G_1 = \{x_{2i}x_{2j} ~\big|~ 1 \le i < j \le k\}$.
		\item[\textsc{Step 2:}] Set $\text{NF} = \{x_ix_{j} ~\big|~ i \not\in 2\ZZ, j \in 2\ZZ, \text{ and } i+1 < j\}$.
		\item[\textsc{Step 3:}] Set $G_2 = \{x_ix_jx_{2k+1} ~\big|~ x_ix_j \not\in (G_1 \cup \text{NF})\}$.
		\item[\textsc{Step 4:}] Set $G_3 = \{x_ix_jx_l ~\big|~ 1 \le i < j < l\le k, \text{ and } i,j, l \not\in 2\ZZ\}$.
		\item[\textsc{Step 5:}] Select
		$$\dfrac{1}{2} \cdot \dfrac{k(k+1)(4k-1)}{6} - \dfrac{k(k-1)(k-2) + 6k^2}{6}$$
		degree 3 monomials in the complement of $G_2 \cup G_3$, which are not divisible by monomials of $G_1$ or $\text{NF}_1$. Denote the set of these monomials by $G_4$.
		\item[\textsc{Step 6:}] Output $I = \langle G_1 \cup G_2 \cup G_3 \cup G_4\rangle$, the monomial ideal generated by monomials in $G_1, \dots, G_4$.
	\end{enumerate}
\end{algorithm}

\begin{theorem}
	\label{thm.deg23odd1}
	The ideal $I$ output from Algorithm \ref{23alg.odd1} is an $f$-ideal.
\end{theorem}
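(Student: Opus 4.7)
The plan is to apply Lemma \ref{abcd.lem} and verify the equality $\#A_d(I) = \#C_d(I)$ for every degree $d$. The equalities $\#A_d = \#C_d = 0$ hold trivially for $d \in \{0,1\}$. For $d \ge 4$, since all generators have degree at most 3, the condition reduces to $A_d = \emptyset$, i.e., every squarefree degree-$d$ monomial lies in $I$. This would follow from a quick case analysis based on the number of even-indexed variables in $\{x_2, \dots, x_{2k}\}$ occurring in the monomial: two or more forces divisibility by an element of $G_1$, and otherwise the abundance of odd indices supplies a $3$-subset realized by a generator in $G_2$ or $G_3$. Hence the real work is in degrees $2$ and $3$.

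In degree $2$, $C_2(I) = G_1$ has cardinality $\binom{k}{2}$, and I would partition the squarefree degree-$2$ monomials outside $G_1$ into three classes according to the parities of the two indices. The both-odd class and the adjacent mixed class each appear as the degree-$2$ factor $x_ix_j$ of a generator $x_ix_jx_{2k+1} \in G_2$ (since the corresponding $x_ix_j$ lies outside $G_1 \cup \text{NF}$ by construction), and hence are of type $B$; the NF class, by contrast, divides no generator by design of $G_2$, $G_3$, and $G_4$, and hence is of type $A$. A direct enumeration yields $|\text{NF}| = \binom{k}{2} = |C_2|$.

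In degree $3$, since every generator has degree at most $3$ one has $B_3 = \emptyset$, so $\#A_3 = \binom{n}{3} - \#I_3$. I would decompose $I_3$ as the squarefree degree-$3$ monomials divisible by some $G_1$ element (disjoint from the degree-$3$ generators by minimality) together with $C_3 = G_2 \sqcup G_3 \sqcup G_4$. Enumerating the divisibility count by how many even indices appear in the monomial gives $\binom{k}{3} + (k+1)\binom{k}{2}$, while direct tallies produce $|G_2| = k^2$ and $|G_3| = \binom{k}{3}$. The target equality $\#A_3 = \#C_3$ then reduces to
\[
\frac{k(k+1)(4k-1)}{6} \;=\; 2\bigl(|G_2| + |G_3| + |G_4|\bigr),
\]
which, after substituting the values of $|G_2|$ and $|G_3|$, is exactly the cardinality prescribed for $|G_4|$ in Step 5.

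The main obstacle is verifying that Step 5 is actually executable---that is, that the pool of degree-$3$ monomials in the complement of $G_2 \cup G_3$ and not divisible by any element of $G_1 \cup \text{NF}$ has cardinality at least the required $|G_4|$. The congruence hypothesis $k \equiv 0$ or $k \equiv 3 \pmod 4$ is precisely what makes $|G_4|$ a nonnegative integer, so feasibility then amounts to an explicit lower bound on the eligible pool, which I expect to be the most delicate piece of bookkeeping. Once that count is established, combining the degree-wise equalities via Lemma \ref{abcd.lem} concludes that $I$ is an $f$-ideal.
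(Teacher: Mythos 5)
Your overall strategy coincides with the paper's: apply Lemma \ref{abcd.lem}, dispose of degrees $0$, $1$ and $\ge 4$ by direct divisibility arguments, match $A_2(I)=\text{NF}$ against $C_2(I)=G_1$ (both of cardinality $\binom{k}{2}$), and in degree $3$ observe that the monomials with at most one even index are exactly those outside $D_3$, that $B_3=\emptyset$, and that Step 5 is calibrated so that these split evenly between $A_3$ and $C_3$. All of those reductions are correct and agree with the paper's counts ($\#G_2=k^2$, $\#G_3=\binom{k}{3}$, $\#(\M_{2k+1,3}\setminus D_3)=\frac{k(k+1)(4k-1)}{6}$).

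The genuine gap is the step you explicitly defer: you never verify that Step 5 is executable, i.e., that the pool of degree-$3$ monomials outside $D_3\cup G_2\cup G_3$ and divisible by no element of $G_1\cup\text{NF}$ has at least $\frac{1}{2}\cdot\frac{k(k+1)(4k-1)}{6}-\frac{k(k-1)(k-2)+6k^2}{6}$ elements. This is not routine bookkeeping to be waved at --- it is the main technical content of the theorem, since without it the algorithm may simply fail to produce $G_4$ and the equality $\#A_3=\#C_3$ cannot be forced. The paper settles it by exhibiting an explicit eligible family, namely the monomials $x_ax_bx_c$ with $a,b$ odd (excluding the index $2k+1$), $c$ even and $c<\min(a,b)+1$, counting this family as $\frac{(k-1)k(k+1)}{6}-(k-1)$, and checking that this quantity dominates the required size of $G_4$ for $k\ge 3$. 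Your write-up needs that (or an equivalent) count to be a proof. One smaller imprecision: the hypothesis $k\equiv 0,3 \pmod 4$ guarantees only that $\frac{k(k+1)(4k-1)}{6}$ is even, hence that the prescribed $\#G_4$ is an integer; its nonnegativity is a separate inequality ($\#G_2+\#G_3$ is less than half of $\#(\M_{2k+1,3}\setminus D_3)$), which holds for $k\ge 3$ and must also be recorded.
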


\begin{proof} Our goal is to apply Lemma \ref{abcd.lem}. Let $A_{i}$, $B_{i}$, $C_{i}$ and $D_{i}$ be as in Notation \ref{abcd}. $\#A_1 = \#C_1 = 0$ because every variable shows up in some generator. We proceed to show that $\#A_d = \#C_d$ for all $2 \le d \le n$.\\
	
It is easy to see that both $G_1$ and $\text{NF}$ contain exactly $\binom{k}{2}$ elements. This guarantees that $\#A_2 = \#C_2 = \binom{k}{2}$ since all other degree 2 squarefree monomials divide some element of $G_{2}$ and so belong to $B_{2}$. \\

Consider elements of $\mathcal{M}_{2k+1,3}$ that are not in $D_3$. From the construction, these monomials are exactly the products of 2 variables of odd indices and one with even index and the product of 3 variables of odd indices. Thus,
$$\#(\mathcal{M}_{2k+1,d}\backslash D_{3}) = \binom{k+1}{2}\binom{k}{1} + \binom{k+1}{3} = \dfrac{k(k+1)(4k-1)}{6}.$$
By our assumption that $k\equiv 0\;\text{mod}(4)$ or $k\equiv 3\;\text{mod}(4)$ this is always even and so it is possible that the remaining monomials can be split evenly between $A_{3}$ and $C_{3}$.

 We note that $\#G_{2}=\binom{2k}{2}-2\binom{k}{2}=k^{2}$ and $\#G_{3}=\binom{k}{3}$, so we have already selected $\dfrac{k(k-1)(k-2)+6k^{2}}{6}$ elements to be in $C_{3}$. We note that this is always less than half of $\#(\mathcal{M}_{2k+1,d}\backslash D_{3})$ when $k\ge 3$. Hence we must add monomials into $C_{3}$ until \[\#C_{3}=\dfrac{1}{2}\dfrac{k(k+1)(4k-1)}{6}\] at which point the remaining monomials will be in $A_{3}$ since there are no degree 4 generators.

 It is important to note that picking out $G_4$ in Step 5 is possible. To see this, it suffices to show that there are at least $\dfrac{1}{2}\dfrac{k(k+1)(4k-1)}{6}-\dfrac{k(k-1)(k-2)+6k^{2}}{6}$ squarefree monomials of degree 3 which are not in $D_{3}\cup G_{2}\cup G_{3}$ and not divisible by elements in $\text{NF}_{1}$. Indeed, the monomials $x_{a}x_{b}x_{c}$ where $a$ and $b$ are odd (excluding $x_{2n+1}$) and $c$ even with $c<\text{min}(a,b)+1$ are not multiples of elements of $\text{NF}$ nor do they belong to $D_{3}$ or $G_{2}\cup G_{3}$. We can count these monomials by fixing the smaller of the two odd numbers and then cycling through all larger odd numbers and all smaller even numbers in the following way:
 \begin{enumerate}
 	\item[1)] For $x_{1}$ there are no options of this type (since there are no smaller even numbers).
 	\item[2)] For $x_{3}$ we can pick $x_{2}$ or $x_{4}$ and any of the $k-2$ higher odds.\\
 	\vdots
 	\item[k-1)] For $x_{2k-3}$ we can pick $x_{2}$ through $x_{2k-2}$ and have higher odd $x_{2k-1}$.
\end{enumerate}
The total count from this is
\[2(k-2)+3(k-3)+4(k-4)+\cdots + (k-2)2+(k-1)=\frac{(k-1)k(k+1)}{6}-(k-1).\]
Observe that
 \[\dfrac{(k-1)k(k+1)}{6}-(k-1)\ge\dfrac{1}{2}\dfrac{k(k+1)(4k-1)}{6}-\dfrac{k(k-1)(k-2)+6k^{2}}{6}.\]
Therefore, it follows that there are enough monomials from which we can select  $G_{4}$ to force $\#C_{3}=\#A_{3}$.

Finally, it can be seen that all squarefree monomials of degree $d \ge 4$ are in $D_d$, i.e., $\#A_d = \#C_d = 0$, since they are divided by either elements in $G_{1}$, $G_{2}$ or $G_{3}$. The conclusion now follows from Lemma \ref{abcd.lem}.
\end{proof}

The following example illustrates how Algorithm \ref{23alg.odd1} works.

\begin{example} Let $n = 7$ (and $k = 3$). Consider the monomial ideal $I = \langle G_1 \cup G_2 \cup G_3 \cup G_4\rangle$, where $G_1, \dots, G_4$ are described as follows:
	\begin{itemize}
		\item $G_1 = \{x_{2}x_{4},x_{2}x_{6},x_{4}x_{6}\}$.
		\item $\text{NF} = \{x_{1}x_{4},x_{1}x_{6},x_{3}x_{6}\}$.
		\item $G_2 = \{x_{1}x_{2}x_{7}, x_{3}x_{4}x_{7}, x_{5}x_{6}x_{7}, x_{1}x_{3}x_{7}, x_{1}x_{5}x_{7}, x_{3}x_{5}x_{7}, x_{2}x_{5}x_{7}, x_{4}x_{5}x_{7}\}$.
		\item $G_3 = \{x_{1}x_{3}x_{5}\}$.
		\item $G_4 = \{x_{2}x_{3}x_{5}\}$.
	\end{itemize}
	\vspace{-.5cm}
	 \begin{align*}
	I = \langle&x_{2}x_{4},x_{2}x_{6},x_{4}x_{6},x_{1}x_{2}x_{7},x_{3}x_{4}x_{7}, x_{5}x_{6}x_{7},x_{1}x_{3}x_{7},\\
	&x_{1}x_{5}x_{7},x_{3}x_{5}x_{7},x_{2}x_{5}x_{7},x_{4}x_{5}x_{7},x_{1}x_{3}x_{5},x_{2}x_{3}x_{5}\rangle
\end{align*}
\end{example}

%%%%%%%%%%%%%%%%%%%%%%%%%%%%%%%%%%%%%%%%%%%%
We now present the algorithm covering the cases where $n=2k+1$ and $k\equiv 1 \ (\text{mod }4)$ or $k\equiv 2 \ (\text{mod } 4)$.
\begin{algorithm}
	\label{23alg.odd2}
	Let $n = 2k+1$ for $k \ge 15$, $k\equiv 1 \ (\text{mod }4)$ or $k\equiv 2 \ (\text{mod } 4)$. Let $R = \kk[x_1, \dots, x_n]$. We construct an ideal $I\subset R$ as follows:
	\begin{enumerate}[align=left]
		\item[\textsc{Step 1:}] Set $G_1 = \{x_{2i}x_{2j} ~\big|~ 1 \le i < j \le k\}\cup \{x_{3}x_{2},x_{5}x_{2}\}$.
		\item[\textsc{Step 2:}] Set $\text{NF} = \{x_ix_{j} ~\big|~ i \not\in 2\ZZ, j \in 2\ZZ, \text{ and } i+1 < j\}\cup\{x_{2}x_{7},x_{2}x_{9}\}$.
		\item[\textsc{Step 3:}] Set $G_2 = \{x_ix_jx_{2k+1} ~\big|~ x_ix_j \not\in (G_1 \cup \text{NF})\}$.
		\item[\textsc{Step 4:}] Set $G_3 = \{x_ix_jx_l ~\big|~ 1 \le i < j < l\le k, \text{ and } i,j, l \not\in 2\ZZ\}$.
		\item[\textsc{Step 5:}] Select
		$$\dfrac{1}{2} \cdot \dfrac{4k^{3}+3k^{2}-13k+6}{6} - \dfrac{k(k-1)(k-2) + 6k^2-24}{6}$$
		degree 3 monomials in the complement of $G_2 \cup G_3$, which are not divisible by monomials of $G_1$ or $\text{NF}$. Denote the set of these monomials by $G_4$.
		\item[\textsc{Step 6:}] Output $I = \langle G_1 \cup G_2 \cup G_3 \cup G_4\rangle$, the monomial ideal generated by monomials in $G_1, \dots, G_4$.
	\end{enumerate}
\end{algorithm}

\begin{theorem}
	\label{thm.deg23odd2}
	The ideal $I$ output from Algorithm \ref{23alg.odd2} is an $f$-ideals.
\end{theorem}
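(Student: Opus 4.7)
The plan is to apply Lemma \ref{abcd.lem} and verify $\#A_{d}(I)=\#C_{d}(I)$ for every $d$, following the blueprint of the proof of Theorem \ref{thm.deg23odd1}. The extra elements $\{x_{2}x_{3},x_{2}x_{5}\}$ in $G_{1}$ and $\{x_{2}x_{7},x_{2}x_{9}\}$ in $\text{NF}$ are the parity corrections needed when $k\equiv 1,2\pmod{4}$, and they must be tracked carefully in the degree-2 and degree-3 bookkeeping. For $d=1$ every variable occurs in some generator, so $\#A_{1}=\#C_{1}=0$; for $d\ge 4$ every squarefree monomial is divisible by some element of $G_{1}\cup G_{2}\cup G_{3}$, so $\#A_{d}=\#C_{d}=0$. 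The substantive work is in dimensions $2$ and $3$.

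For $d=2$, first note $\#G_{1}=\binom{k}{2}+2=\#\text{NF}$, so $\#C_{2}=\binom{k}{2}+2$. I would classify degree-2 squarefree monomials by the parities of their indices: the $\binom{k}{2}$ even-even pairs all lie in $G_{1}=C_{2}$; every odd-odd pair (including those involving $x_{2k+1}$) lies outside $G_{1}\cup\text{NF}$ and therefore divides some $G_{2}$ generator $x_{i}x_{j}x_{2k+1}$, putting it in $B_{2}$; and among the odd-even pairs, $\{x_{2}x_{3},x_{2}x_{5}\}\subset C_{2}$, the pairs in $\text{NF}$ lie in $A_{2}$ (they are avoided by $G_{2}$ and $G_{4}$ by construction and cannot divide a $G_{3}$ generator, which uses only odd indices), and every remaining odd-even pair divides a $G_{2}$ generator. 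Thus $A_{2}=\text{NF}$ and $\#A_{2}=\#C_{2}$.

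For $d=3$, since no degree-3 monomial properly divides a generator, $\mathcal{M}_{n,3}\setminus D_{3}=A_{3}\sqcup C_{3}$, so one only needs $C_{3}$ to be exactly half of this set. A monomial lies in $\mathcal{M}_{n,3}\setminus D_{3}$ iff it is not divisible by any element of $G_{1}$, which means either all three indices are odd, or exactly one is even and the triple avoids both $\{2,3\}$ and $\{2,5\}$. A parity count with inclusion-exclusion gives $\#(\mathcal{M}_{n,3}\setminus D_{3})=\binom{k+1}{3}+\binom{k+1}{2}k-(2k-1)=\frac{4k^{3}+3k^{2}-13k+6}{6}$, and for $k\equiv 1,2\pmod{4}$ this quantity is even, so it can indeed be split evenly. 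Using disjointness ($x_{2k+1}$ appears in every $G_{2}$ element but in no $G_{3}$ element), one computes $\#G_{2}+\#G_{3}=(k^{2}-4)+\binom{k}{3}=\frac{k(k-1)(k-2)+6k^{2}-24}{6}$, so Step 5 selects exactly the right count of monomials to enforce $\#C_{3}=\frac{1}{2}\#(\mathcal{M}_{n,3}\setminus D_{3})$.

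The main obstacle is verifying that Step 5 is actually executable, i.e., that there exist at least $\#G_{4}$ degree-3 squarefree monomials not in $G_{2}\cup G_{3}$ and not divisible by any element of $G_{1}\cup\text{NF}$. I would adapt the explicit candidate family from the proof of Theorem \ref{thm.deg23odd1}---monomials $x_{a}x_{b}x_{c}$ with $a,b$ odd in $\{1,\dots,2k-1\}$ and $c$ even with $c\le\min(a,b)+1$---recount the $\frac{(k-1)k(k+1)}{6}-(k-1)$ elements from that proof, and then subtract the bounded number (at most $4(k-2)$) of candidates spoiled by the new obstructions $\{x_{2}x_{3},x_{2}x_{5},x_{2}x_{7},x_{2}x_{9}\}$, all of which are of the form $x_{2}x_{j}x_{\text{odd}}$ with $j\in\{3,5,7,9\}$. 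The hypothesis $k\ge 15$ should ensure that the resulting count still dominates the required $\#G_{4}$, so the algorithm terminates successfully and Lemma \ref{abcd.lem} yields the conclusion.
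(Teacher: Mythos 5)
Your proposal is correct and follows essentially the same route as the paper's proof: Lemma \ref{abcd.lem}, the identification $A_2=\text{NF}$ with $\#A_2=\#C_2=\binom{k}{2}+2$, the count $\#(\M_{n,3}\setminus D_3)=\frac{4k^3+3k^2-13k+6}{6}$ with the parity observation, and the same explicit candidate family (monomials $x_ax_bx_c$ with $a,b$ odd and $c$ even, $c\le\min(a,b)-1$, minus the multiples of $x_2x_3,x_2x_5,x_2x_7,x_2x_9$) to certify that Step 5 is executable for $k\ge 15$. The only differences are cosmetic: you are somewhat more explicit in the degree-2 parity classification, and you bound the spoiled candidates by $4(k-2)$ where the paper subtracts $4k-6$; both give a valid lower bound on the available pool.
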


\begin{proof} We follow the same line of arguments as that of \ref{thm.deg23odd1}.
It is easy to see that both $G_1$ and $\text{NF}$ contain exactly $\binom{k}{2}+2$ elements. This guarantees that $\#A_2 = \#C_2 = \binom{k}{2}+2$ since all other degree 2 squarefree monomials divide some element of $G_{2}$ and so belong to $B_{2}$.

Consider elements of $\mathcal{M}_{2k+1,3}$ that are not in $D_3$. As in Theorem \ref{thm.deg23odd1}, these monomials are exactly the products of 2 variables of odd indices and one with even index and the product of 3 variables of odd indices, with the additional elements which are divided by $x_{2}x_{3}$ and $x_{2}x_{5}$ which divide $k$ and $k-1$ additional elements (these are of the form $x_{2}x_{3}x_{\alpha}$ and $x_{2}x_{5}x_{\beta}$ where $\alpha$ and $\beta$ are odd). Thus,
$$\#(\mathcal{M}_{2k+1,d}\backslash D_{3}) = \binom{k+1}{2}\binom{k}{1} + \binom{k+1}{3} -(2k-1) = \dfrac{4k^{3}+3k^{2}-13k+6}{6}.$$
It follows from the assumption on $k$ that $\#(\M_{2k+1,d} \setminus D_3)$ is always even and so it is possible that the remaining monomials can be split evenly between $A_{3}$ and $C_{3}$.

 Note that $\#G_{2}=\binom{2k}{2}-2\binom{k}{2}-4=k^{2}-4$ and $\#G_{3}=\binom{k}{3}$, so we have already selected $\dfrac{k(k-1)(k-2)+6k^{2}-24}{6}$ elements to be in $C_{3}$. Note also that this is always less than half of $\#(\mathcal{M}_{2k+1,d}\backslash D_{3})$ when $k\ge 15$. Thus, we add monomials into $C_{3}$ until \[\#C_{3}=\dfrac{1}{2}\dfrac{k(k+1)(4k-1)}{6},\]
 at which point the remaining monomials will be in $A_{3}$ since there are no degree 4 generators.

 Again, we have to show that the choice for $G_4$ in Step 5 is possible. That is, there are at least $\dfrac{1}{2}\dfrac{4k^{3}+3k^{2}-13k+6}{6}-\dfrac{k(k-1)(k-2)+6k^{2}-24}{6}$ squarefree monomials of degree 3 which are not in $D_{3}\cup G_{2}\cup G_{3}$ and not divisible by elements in $\text{NF}$. The monomials $x_{a}x_{b}x_{c}$ where $a$ and $b$ are odd (excluding $x_{2n+1}$) and $c$ even with $c<\text{min}(a,b)+1$ are not multiples of elements of $\text{NF}$ nor do they belong to $D_{3}$ or $G_{2}\cup G_{3}$, there is now however an exception since we must avoid multiples of $x_{2}x_{3},x_{2}x_{5},x_{2}x_{7},x_{2}x_{9}$ so at the end we will subtract by $4k-6$. As before, we can count these by fixing the smaller of the two odd numbers and then cycling through all larger odd numbers and all smaller even numbers in the following way:
\begin{enumerate}
	\item[1)] For $x_{1}$ there are no options of this type (since there are no smaller even numbers).
	\item[2)] For $x_{3}$ we can pick $x_{2}$ or $x_{4}$ and any of the $k-2$ higher odds.\\
	\vdots
	\item[k-1)] For $x_{2k-3}$ we can pick $x_{2}$ through $x_{2k-2}$ and have higher odd $x_{2k-1}$.
\end{enumerate}
After subtracting the $4k-6$ multiples of $x_{2}x_{3},x_{2}x_{5},x_{2}x_{7},x_{2},x_{9}$, the total number we obtain is
\[2(k-2)+3(k-3)+4(k-4)+\cdots (k-2)2+(k-1)-(4k-6)=\dfrac{(k-1)k(k+1)}{6}-(k-1)-(4k-6)\]
Observe that, for $k \ge 15$,
 \[\dfrac{(k-1)k(k+1)}{6}-(k-1)-(4k-6)\ge\dfrac{1}{2}\dfrac{4k^{3}+3k^{2}-13k+6}{6}-\dfrac{k(k-1)(k-2)+6k^{2}-24}{6}.\]
Therefore, it follows that there are enough monomials from which we can select  $G_{4}$ to force $\#C_{3}=\#A_{3}$.

Finally, it can be seen that all squarefree monomials of degree $d \ge 4$ are in $D_d$, i.e., $\#A_d = \#C_d = 0$, since they are divided by either elements in $G_{1}$, $G_{2}$ or $G_{3}$. The conclusion now follows from Lemma \ref{abcd.lem}.
\end{proof}

%%%%%%%%%%%%%%%%%%%%%%%%%%%%

\subsection{Even number of variables.} We shall now consider the case when the number of variables is even. Our algorithm reads as follows.

\begin{algorithm}
	\label{23alg.even}
	Let $n = 2k$ for $k\ge 4$ and let $R = \kk[x_1, \dots, x_n]$.
\begin{enumerate}[align=left]
\item[\textsc{Step 1:}] Set $G_{1}=\{x_{2i}x_{2j} ~\big|~ 1 \le i < j < k\}$.
\item[\textsc{Step 2:}] Set $\text{NF}=\{x_{i}x_{j} ~\big|~ i\notin 2\mathbb{Z}, j\in 2\mathbb{Z}, \text{ and } i+1 < j < 2k\}$.
\item[\textsc{Step 3:}] Set $G_{2}=\{x_{i}x_{j}x_{2k-1} ~\big|~ i < j < 2k-1, x_{i}x_{j}\notin (\text{NF}\cup G_{1})\}$.
\item[\textsc{Step 4:}] Set $G_{3}=\{x_{i}x_{j}x_{l} ~\big|~ 1 \le i<j<l < 2k-1 \text{ and } i,j,l\notin 2\mathbb{Z}\}$.
\item[\textsc{Step 5:}] Set $G_{4}=\{x_{2k}x_{i}x_{j} ~\big|~ 1\le i < j \le 2k-1 \text{ and } i,j\notin 2\mathbb{Z}\}$
\item[\textsc{Step 6:}] Set $G_{5}=\{x_{2k}x_{2k-1}x_{i} ~\big|~ i\in 2\mathbb{Z}\}$
\item[\textsc{Step 7:}] Select
$$\frac{k^{3}-3k^{2}-4k+6}{6}$$
degree 3 monomials from the complement of $G_{2}\cup G_{3}\cup G_{4}\cup G_{5}$, which are not divisible by monomials of $G_{1}$ or $\text{NF}$. Denote the set of these monomials by $G_{6}$.
\item[\textsc{Step 6:}] Output $I=\langle G_{1}\cup\cdots\cup G_{6}\rangle$, the monomial ideals generated by monomials in $G_1, \dots, G_6$.
\end{enumerate}
\end{algorithm}

\begin{theorem}
	\label{thm.deg23even}
	The ideal $I$ output from Algorithm \ref{23alg.even} is an $f$-ideal.
\end{theorem}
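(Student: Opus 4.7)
Following the template of Theorems \ref{thm.deg23odd1} and \ref{thm.deg23odd2}, the plan is to apply Lemma \ref{abcd.lem} and verify $\#A_d(I) = \#C_d(I)$ for every $d$. The case $d=1$ is immediate, since every variable appears in some generator. For $d \ge 4$, any squarefree monomial must lie in $I_d$: if it avoids every element of $G_1$, then it has at most one index in $E' := \{2, 4, \ldots, 2k-2\}$, hence at least $d-1 \ge 3$ indices among the odd indices together with $x_{2k}$, and a quick check based on whether $x_{2k}$ appears produces a $3$-subset of its support lying in $G_2 \cup G_3 \cup G_4 \cup G_5$. Thus $\#A_d = \#C_d = 0$ for $d \ge 4$.

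For degree $d=2$, both $\#G_1$ and $\#\text{NF}$ evaluate to $\binom{k-1}{2}$ by direct counting. To establish $A_2 = \text{NF}$ I will verify two things: first, that no NF element divides any generator in $G_2 \cup G_3 \cup G_4 \cup G_5$ (the odd/even index pair with a strict gap, both indices below $2k$, conflicts with the structural shape of each $G_j$); second, that every degree-$2$ squarefree monomial outside $G_1 \cup \text{NF}$ divides some generator. The latter step is a short case analysis: $x_{2k} x_{2j}$ divides an element of $G_5$; $x_{2m-1} x_{2m}$ divides an element of $G_2$; a monomial with both indices odd divides an element of $G_2$ or $G_3$; and a mixed-parity monomial involving $x_{2k}$ divides an element of $G_4$ or $G_5$.

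For $d=3$, the set $D_3$ consists of degree-$3$ squarefree monomials with at least two indices in $E'$. Therefore $\#(\mathcal{M}_{2k,3} \setminus D_3) = \binom{k+1}{3} + (k-1)\binom{k+1}{2} = \tfrac{2k(k^2-1)}{3}$, which is even, giving the target $\#C_3 = \#A_3 = \tfrac{k(k^2-1)}{3}$. Summing $\#G_2 = (k-1)^2$, $\#G_3 = \binom{k-1}{3}$, $\#G_4 = \binom{k}{2}$, $\#G_5 = k-1$ and subtracting from the target returns exactly $\tfrac{k^3 - 3k^2 - 4k + 6}{6}$, confirming that the count prescribed in Step~7 is consistent.

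The main obstacle is to show that Step~7 is feasible, i.e., that there are at least $\tfrac{k^3 - 3k^2 - 4k + 6}{6}$ admissible degree-$3$ monomials available for $G_6$. A candidate must avoid $D_3 \cup G_2 \cup G_3 \cup G_4 \cup G_5$ and, crucially, must not be divisible by any element of $\text{NF}$ (otherwise that NF element would be pulled into $B_2$, destroying $\#A_2 = \#C_2$). I will partition $\mathcal{M}_{2k,3} \setminus D_3$ by the tuple (number of odd indices in $O' := \{1, 3, \ldots, 2k-3\}$, whether $x_{2k-1}$ appears, number of $E'$-indices, whether $x_{2k}$ appears). After discarding the generator sets, the only surviving patterns are $(2,0,1,0)$ and $(1,0,1,1)$; the borderline pattern $(1,1,1,0)$ splits cleanly between $G_2$ and the NF-divisible monomials, contributing nothing. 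Within each surviving pattern, NF-avoidance forces the smallest odd index $o$ to satisfy $o \ge e - 1$ relative to the unique $E'$-index $e$. A hockey-stick summation then produces $\binom{k}{3} + \binom{k}{2} = \tfrac{k^3 - k}{6}$ admissible candidates, and the inequality $\tfrac{k^3 - k}{6} \ge \tfrac{k^3 - 3k^2 - 4k + 6}{6}$ reduces to $(k-1)(k+2) \ge 0$, which holds for all $k \ge 1$ and in particular for $k \ge 4$. Lemma~\ref{abcd.lem} then closes the proof.
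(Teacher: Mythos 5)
Your proposal is correct and follows essentially the same route as the paper: apply Lemma \ref{abcd.lem}, match $\#A_2=\#C_2=\binom{k-1}{2}$ and $\#A_3=\#C_3=\tfrac{1}{2}\#(\M_{2k,3}\setminus D_3)$, and verify feasibility of Step 7 by counting NF-avoiding candidates (your count $\binom{k}{3}+\binom{k}{2}$ adds the family $x_{2k}x_ox_e$ with $o\ge e-1$ to the paper's $\binom{k}{3}$ candidates, which already suffice). The only slip is in your degree-2 case list, which is not exhaustive: a mixed-parity pair $x_ex_o$ with $e$ even, $o$ odd, $e<o<2k-1$ (e.g.\ $x_2x_5$) fits none of your four cases, though it is handled identically since it divides $x_ex_ox_{2k-1}\in G_2$.
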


\begin{proof} Once again, we use the same line of arguments as that of Theorem \ref{thm.deg23odd1}. To start, we have $\#A_{2}(I)=\#C_{2}(I)=\binom{k-1}{2}$ by construction and that all other squarefree monomials in degree 2 are in $B_{2}$ since they divide elements in $G_{2}$, $G_{4}$ or $G_{5}$.

Consider elements of $\mathcal{M}_{2k,3}$ that are not in $D_3$. These monomials are exactly the products of 2 variables of odd indices and one with even index and the product of 3 variables of odd indices and squarefree monomials of the form $x_{2k}x_{a}x_{b}$ where $a|2$ and $b\nmid 2$. Thus, $$\#(\mathcal{M}_{2k,d}\backslash D_{3}) = \binom{k}{2}\binom{k}{1} + \binom{k}{3} + (k-1)k = \dfrac{4k(k-1)(k+1)}{6}.$$
This is always even and so it is possible that the remaining monomials can be split evenly between $A_{3}$ and $C_{3}$.

Note that $\#G_{2}=\binom{2k-2}{2}-2\binom{k-1}{2}=(k-1)^{2}$ and $\#G_{3}=\binom{k-1}{3}$,
$\#G_{4}=\binom{k}{2}$ and $\#G_{5}=k-1$.

 so we have already selected $\dfrac{(k-1)(k^{2}+4k+6)}{6}$ elements to be in $C_{3}$. Note further that this is always less than half of $\#(\mathcal{M}_{2k+1,d}\backslash D_{3})$ when $k\ge 6$. Hence we add monomials into $C_{3}$ until \[\#C_{3}=\dfrac{1}{2}\dfrac{4k(k-1)(k+1)}{6},\]
at which point the remaining monomials will be in $A_{3}$ since there are no degree 4 generators. \\

 Again, we need to show that it is possible to pick out $G_6$. That is, there are at least \[\dfrac{1}{2}\dfrac{4k(k-1)(k+1)}{6}-\dfrac{(k-1)(k^{2}+4k+6)}{6}=\frac{k^{3}-3k^{2}-4k+6}{6}\] squarefree monomials of degree 3 which are not in $D_{3}\cup G_{2}\cup G_{3}$ and are not divisible by elements in $\text{NF}$. The monomials $x_{a}x_{b}x_{c}$ where $a$ and $b$ are odd (excluding $x_{2n-1}$) and $c$ even with $c\le\text{min}(a,b)+1$ are not multiples of elements of $\text{NF}$ nor do they belong to $D_{3}$ or $G_{2}\cup G_{3}\cup G_{4}\cup G_{5}$. As before, we can count these by fixing the smaller of the two odd numbers and then cycling through all larger odd numbers and all smaller even numbers:
\begin{enumerate}
	\item[1)] For $x_{1}$ we can pick $x_{2}$ and any of the $k-2$ higher odds (excluding $x_{2k-1}$).
	\item[2)] For $x_{3}$ we can pick $x_{2}$ or $x_{4}$ and any of the $k-3$ higher odds.
	\item[3)] For $x_{5}$ we can pick $x_{2}$ or $x_{4}$ or $x_{6}$ with any of the $k-4$ higher odds.\\
	\vdots
	\item[k-2)] For $x_{2k-5}$ we can pick $x_{2}$ through $x_{2k-4}$ and have higher odd $x_{2k-3}$.
\end{enumerate}

The total count is
\[(k-2)+2(k-3)+3(k-4)+\cdots (k-3)2+(k-2)=\sum_{i=1}^{k-2}i(k-1-i)=\frac{k^{3}-3k^{2}+2k}{6}\]
Observe that
\[\frac{k^{3}-3k^{2}+2k}{6}\ge \frac{k^{3}-3k^{2}-4k+6}{6}\]
Therefore, there are enough monomials which are not multiples of our previously selected degree 2 nonfaces  which we can select as generators to reach
\[\frac{1}{2}\frac{4k(k-1)(k+1)}{6}\]
which was half of $\#(\M_{2k,3}\backslash D_{3})$. It follows that the remaining degree three monomials belong to $A_{3}$.

Finally, it can be seen that all squarefree monomials of degree $d \ge 4$ are in $D_d$, i.e., $\#A_d = \#C_d = 0$, since they are divided by either elements in $G_{1}$, $G_{2}$, $G_{3}$ or $G_{4}$. The conclusion now follows from Lemma \ref{abcd.lem}.
\end{proof}

%%%%%%%%%%%%%%%%%%%%%%%%%%%%

\subsection{Additional algorithms.} We end this section by providing two more algorithms to construct mixed $f$-ideals generated in degrees 2 and 3 for the cases where $n\equiv 3$ (mod 4) and $n\equiv 2$ (mod 4) respectively.

%\begin{algorithm}
%	\label{alg.odd2}
%	\textcolor{red}{Need to write the algorithm as in Algorithms \ref{alg.even} and \ref{alg.odd}.}
%\end{algorithm}

\begin{algorithm}
	\label{alg.odd3} Let $k\geq3$ be an odd positive integer and let $l$ and $m$ be two fixed distinct positive integers less than or equal to $k$. Let $R = \kk[x_1,x_2,\ldots,x_{k},y_1,y_2,\dots,y_{k+1}]$.
	\begin{enumerate}[align=left]
		\item[\textsc{Step 1:}] Set $G_1=\{x_{i}x_j ~\big|~ i\neq j\in[k]\}\backslash\{x_lx_m\}$.
		\item[\textsc{Step 2:}] Set $G_2=\{y_{i}y_j ~\big|~ i\neq j\in[k+1]\}$.
		\item[\textsc{Step 3:}] Set $G_3$ to be any subset of the set $M=\{y_{i}x_lx_m ~\big|~ i\in[k+1]\}$ such that  $\#G_3=\frac{k+1}{2}$.
		\item[\textsc{Step 4:}] Output $I = \langle G_1 \cup G_2 \cup G_3\rangle$, the monomial ideal generated by monomials in $G_1, G_2 \ \text{and}\  G_3$.
	\end{enumerate}
\end{algorithm}

%\noindent\textcolor{red}{\bf Isnt the following Algorithm the same as the previous one if we replace $n$ by $n-1$?}

\begin{algorithm}
	\label{alg.odd2} Let $k\geq4$ be an even positive integer and let $l$ and $m$ be two fixed distinct positive integers less than or equal to $k+2$. Let $R = \kk[x_1,x_2,\ldots,x_k,y_1,y_2,\dots,y_{k+2}]$.
	\begin{enumerate}[align=left]
		\item[\textsc{Step 1:}] Set $G_1 =\{x_{i}x_{j} ~\big|~ i\neq j\in[k]\} $.
		\item[\textsc{Step 2:}] Set $G_2=\{y_{i}y_j ~\big|~ i\neq j\in[k+2]\}\backslash\{y_ly_m\}$
		\item[\textsc{Step 3:}] Set $G_3$ to be any subset of the set $M=\{x_{i}y_ly_m ~\big|~ i\in[k]\}$ such that  $\#G_3=\frac{k}{2}$.
		\item[\textsc{Step 4:}] Output $I = \langle G_1 \cup G_2 \cup G_3\rangle$, the monomial ideal generated by monomials in $G_1, G_2 \ \text{and}\  G_3$.
	\end{enumerate}
\end{algorithm}

\begin{theorem}
	\label{thm.deg23odd3}
    The ideals constructed in Algorithms \ref{alg.odd3} and \ref{alg.odd2} are $f$-ideals.
\end{theorem}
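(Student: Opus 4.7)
The strategy is to apply Lemma~\ref{abcd.lem}, checking $\#A_d(I)=\#C_d(I)$ for every $d$. Both algorithms share the same skeleton: $G_1\cup G_2$ is an ``almost complete'' family of degree~$2$ squarefree monomials on two disjoint blocks of variables, omitting only a single pair (namely $x_l x_m$ for Algorithm~\ref{alg.odd3} or $y_l y_m$ for Algorithm~\ref{alg.odd2}), while $G_3$ is a half-subset of the set $M$ of degree~$3$ monomials obtained by extending the omitted pair by one variable from the other block. Since Algorithm~\ref{alg.odd2} is obtained from Algorithm~\ref{alg.odd3} by swapping the roles of the $x$- and $y$-variables, with the parity of $k$ chosen so that $\#M/2$ is an integer, I will carry out the verification in detail for Algorithm~\ref{alg.odd3}; the dual argument then yields the claim for Algorithm~\ref{alg.odd2}.

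In degrees $d=1$ and $d\ge 4$ the equality $\#A_d=\#C_d$ is essentially automatic. For $d=1$, every variable appears in some element of $G_1\cup G_2$, forcing $\#A_1=\#C_1=0$. For $d\ge 4$, one verifies by a short case analysis on the number of $x$- versus $y$-variables in the support of a degree~$4$ squarefree monomial that every such support contains a pair different from the omitted one, hence lies in $G_1 \cup G_2$ and so the monomial lies in $I$. Since any degree $d\ge 4$ squarefree monomial has a degree~$4$ submonomial, this gives $\#A_d=0=\#C_d$ for all $d\ge 4$.

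The substantive counts are in degrees~$2$ and~$3$. For $d=2$, the squarefree monomials outside $I_2$ are the omitted pair $x_l x_m$ together with the $k(k+1)$ mixed monomials $x_i y_j$; of these, exactly $x_l x_m$ and the $2\cdot\#G_3 = k+1$ ``half-monomials'' $x_l y_i, x_m y_i$ with $y_i x_l x_m \in G_3$ divide a generator. Hence $\#A_2 = 1 + k(k+1) - (k+2) = k^2-1$, which matches $\#C_2 = \bigl(\binom{k}{2}-1\bigr) + \binom{k+1}{2} = k^2-1$. For $d=3$, any squarefree monomial whose support is not of the form $\{y_i,x_l,x_m\}$ already contains a pair in $G_1\cup G_2$ and therefore lies in $I$; the remaining $k+1$ candidates, namely the elements of $M$, lie in $I$ precisely when they are selected into $G_3$. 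Since all generators have degree at most $3$, no degree~$3$ monomial outside $G_3$ divides any generator, so the $\#M - \#G_3 = (k+1)/2$ unchosen elements of $M$ lie in $A_3$, matching $\#C_3 = \#G_3 = (k+1)/2$. The main obstacle is this last degree~$3$ analysis: one must be sure that no unchosen member of $M$ inadvertently divides a generator, which reduces to the observation that the only degree~$3$ generators are in $G_3$ itself, together with a careful listing of the degree~$2$ divisors of monomials in $M$.
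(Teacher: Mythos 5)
Your proposal is correct and follows essentially the same route as the paper: both apply Lemma~\ref{abcd.lem} and count the classes $A_d,B_d,C_d,D_d$ degree by degree, the only cosmetic difference being that the paper packages the check as $2\#C_d+\#B_d+\#D_d=\#\M_{n,d}$ while you compare $\#A_d$ with $\#C_d$ directly. Your degree-$2$ and degree-$3$ counts ($\#A_2=\#C_2=k^2-1$, $\#A_3=\#C_3=\tfrac{k+1}{2}$, and the dual counts for Algorithm~\ref{alg.odd2}) agree with the paper's table, and your justification of why the unchosen elements of $M$ land in $A_3$ is in fact more explicit than the paper's.
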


\begin{proof}
Appealing to Lemma 2.7 and noting that $I$ only has generators in degree 2 and 3 (hence $D_{2}=\emptyset$ and $B_{3}=\emptyset$) we see that $I$ is an $f$-ideal if and only if
\[
2(\#C_{2})+\#B_{2}=\#\M_{n,2}\;\; \text{ and }\;\; 2(\#C_{3})+\#D_{3}=\#\M_{n,3}
\]
\[
\#B_{1}=\#\M_{n,1}\;\; \text{ and } \;\; \#D_{4}=\#\M_{n,4}
\]
 Counting the various subsets in algorithm 4.8 we get
\[\begin{tabular}{lllll}
$\#C_{2}=k^{2}-1$ &$\#B_{2}=k+2$ &$\#\M_{n,2}=2k^{2}+k$ &$\#B_{1}=\#\M_{n,1}$ \\
$\#C_{3}=\frac{k+1}{2}$ &$\#D_{3}=\frac{8k^{3}-8k-6}{6}$ &$\#\M_{n,3}=\frac{k(4k^{2}-1)}{3}$ &$\#D_{4}=\#\M_{n,4}$\\
\end{tabular}\] and see that the equalities hold.\\

Counting the various subsets in algorithm 4.9 we get
\[\begin{tabular}{lllll}
$\#C_{2}=k^{2}+k$ &$\#B_{2}=k+1$ &$\#\M_{n,2}=2k^{2}+3k+1$ &$\#B_{1}=\#\M_{n,1}$ \\
$\#C_{3}=\frac{k}{2}$ &$\#D_{3}=\frac{4k^{3}+6k^{2}-k}{3}$ &$\#\M_{n,3}=\frac{k(4k^{2}+6k+2)}{3}$ &$\#D_{4}=\#\M_{n,4}$\\
\end{tabular}\] and again see that the equalities hold.
\end{proof}

\begin{example} In Algorithm \ref{alg.odd3}, let $k=3$ and consider the ring $R=\kk[x_1,x_2,x_3,y_1,y_2,y_3,y_4]$. In this case $G_2=\{y_1y_2,y_1y_3,y_1y_4,y_2y_3,y_2y_4,y_3y_4\}$.  For  $l=1$ and $m=2$, $G_1=\{x_1x_3,x_2x_3\}$ and $M=\{x_1x_2y_1, x_1x_2y_2,x_1x_2y_3,x_1x_2y_4\}$. Now we can form the set $G_3$ in six different ways, which can be done by choosing any $2$-point subset of $M$. Let $G_3=\{x_1x_2y_1, x_1x_2y_2\}$. Then the output ideal
$I = \langle G_1 \cup G_2 \cup G_3\rangle$= $ \langle x_1x_3,x_2x_3, y_1y_2,y_1y_3,y_1y_4,y_2y_3,y_2y_4,y_3y_4, x_1x_2y_1, x_1x_2y_2 \rangle$ is an $f$-ideal of $R$ in mixed degrees with $f(\Delta(I)) =f(\Delta_{I})=(7,13,2)$.
\end{example}

\begin{example} In Algorithm \ref{alg.odd2}, let $k=4$ and consider the ring $R=\kk[x_1,x_2,x_3,x_4,y_1,y_2,y_3,$\\$y_4,y_5,y_6]$. In this case $G_1=\{x_1x_2, x_1x_3,x_1x_4,x_2x_3,x_2x_4,x_3x_4\}$. For  $l=1$ and $m=2$, we have $$G_2=\{y_1y_3,y_1y_4,y_1y_5,y_1y_6,y_2y_3,y_2y_4,y_2y_5,y_2y_6,y_3y_4,y_3y_5,y_3y_6,y_4y_5,y_4y_6,y_5y_6\}$$ and $M=\{x_1y_1y_2, x_2y_1y_2,x_3y_1y_2,x_4y_1y_2\}$. Now we can form the set $G_3$ in six different ways, which can be done by choosing any $2$-point subset of $M$. Let $G_3=\{x_1y_1y_2, x_2y_1y_2\}$. Then the output ideal
$$I = \langle G_1 \cup G_2 \cup G_3\rangle = \langle x_1x_2, x_1x_3,x_1x_4,x_2x_3,x_2x_4,x_3x_4, y_1y_3,y_1y_4,y_1y_5,y_1y_6,y_2y_3,y_2y_4,y_2y_5,$$ $$y_2y_6,y_3y_4,y_3y_5,y_3y_6,y_4y_5,y_4y_6,y_5y_6, x_1y_1y_2, x_2y_1y_2 \rangle$$ is an $f$-ideal of $R$ in mixed degrees with $f(\Delta(I)) =f(\Delta_{I})=(10,25,2)$.
\end{example}

%%%%%%%%%%%%%%%%%%%%%%%%%%%%%%%%%%%%%%%%%%%%%%%%%5

\section{Pure $f$-ideals in small degrees} \label{sec.construction}

In this section we provide novel algorithms for constructing pure $f$-ideals in small degrees (up to degree 5). It is known that pure $f$-ideals exist in every degree $d$ if one is allowed enough variables. Our algorithms are effective for $n \ge d^2$.

For our algorithms in small degrees define the following partitions of variables:
$$ \text{For } i = 1, \dots, d \;\;S_i := \{x_j ~\big|~ j \equiv i \mod d\}$$

For a fixed partition of variables as above and a square-free monomial $g$ define \[b_{i}(g)=\#\left(\{x_{j}\}_{j\in\text{supp}(g)}\cap S_{i}\right)\] Let $a(g)=\left(a_{1}(g),\dots ,a_{d}(g)\right)$ where the entries $a_{j}$ are the $b_{i}(g)$ ordered from greatest to least. We can partition square-free monomials into the following sets indexed by $(\alpha_{1},\dots ,\alpha_{d})\in\mathbb{Z}^{d}$ \[\mathcal{A}(\alpha_{1},\dots, \alpha_{d})=\{g\in\cup_{i=1}^{\infty}\M_{n,i} ~\big|~ a(g)=(\alpha_{1},\dots , \alpha_{d})\}\]

\begin{algorithm}
	\label{alg.deg3}
Let $d = 3$ and $n \ge 9$.
\begin{enumerate}[align=left]
\item[\textsc{Step 1:}] Set $G_1 = \{ x_u x_v x_w ~\big|~ x_u, x_v, x_w\in S_i, i\in[3]\}$
\item[\textsc{Step 2:}] Set $G_2 = \{ x_u x_v x_w ~\big|~ x_u, x_v\in S_i, x_w\in S_{i+1}, i\in[3]\}$
\item[\textsc{Step 3:}] Select $\frac{1}{2}{n \choose 3}-\sum_{i=1}^{2}\#G_{i}$ monomials from $\M_{n,3}\backslash(G_{1}\cup G_{2})$ and call this set $G_{3}$
\item[\textsc{Step 4:}] Output $I=\left<G_{1}\cup G_{2}\cup G_{3}\right>$
\end{enumerate}
\end{algorithm}

\begin{theorem} The ideal $I$ constructed by Algorithm \ref{alg.deg3} is an $f$-ideal.
\end{theorem}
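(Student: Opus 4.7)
The plan is to verify the criterion of Lemma \ref{abcd.lem}, namely that $\#A_d(I)=\#C_d(I)$ for every $d$.  Since $I$ is equi-generated in degree $3$, one has $C_d(I)=\emptyset$ for $d\neq 3$ and $D_3(I)=B_3(I)=\emptyset$, so the check collapses to three substantive requirements: every degree $2$ squarefree monomial must divide some generator (so that $A_2=\emptyset$); $\#C_3=\tfrac12\binom{n}{3}$ holds by construction, whence $\#A_3=\tfrac12\binom{n}{3}$ automatically; and every degree $4$ squarefree monomial must be divisible by some generator (so that $A_4=\emptyset$), from which $A_d=\emptyset$ for all $d\ge 4$ follows because any degree $d$ squarefree monomial has a degree $4$ squarefree divisor.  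Equivalently, by Theorem \ref{thm.PerfectCharacterization}, it suffices to show that $G(I)$ is a perfect set.

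First I would confirm that Step 3 is well-defined; this amounts to showing $0\le \tfrac12\binom{n}{3}-\#G_1-\#G_2 \le \#(\M_{n,3}\setminus(G_1\cup G_2))$.  Writing $s_i:=|S_i|$, so that $s_1+s_2+s_3=n$ and each $s_i\ge 3$ when $n\ge 9$, the upper bound is automatic.  The lower bound follows from
\[
\#G_1+\#G_2 \;=\; \sum_{i=1}^{3}\binom{s_i}{3}+\sum_{i=1}^{3}\binom{s_i}{2}s_{i+1}
\]
together with the identity $\binom{n}{3}=\sum_i\binom{s_i}{3}+\sum_i\binom{s_i}{2}(s_{i+1}+s_{i-1})+s_1s_2s_3$.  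The desired bound $\#G_1+\#G_2\le\tfrac12\binom{n}{3}$ reduces to
\[
\sum_i\binom{s_i}{3}+\sum_i\binom{s_i}{2}(s_{i+1}-s_{i-1})\le s_1s_2s_3,
\]
which can be verified directly using $s_i\in\{\lfloor n/3\rfloor,\lceil n/3\rceil\}$.

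For $A_2(I)=\emptyset$, classify a degree $2$ squarefree monomial $x_ux_v$ by whether its variables share a class.  If $x_u,x_v\in S_i$, then since $s_i\ge 3$ there exists $x_w\in S_i\setminus\{x_u,x_v\}$ and $x_ux_vx_w\in G_1$.  If $x_u\in S_i$, $x_v\in S_j$ with $i\ne j$, then exactly one of $j\equiv i+1$ or $i\equiv j+1\pmod 3$ holds; in the first case, extending by any $x_w\in S_i\setminus\{x_u\}$ yields an element of $G_2$, and in the second, extending by any $x_w\in S_j\setminus\{x_v\}$ does.  For $A_4(I)=\emptyset$, let $h\in\M_{n,4}$ have sorted intersection type $a(h)\in\{(4,0,0),(3,1,0),(2,2,0),(2,1,1)\}$.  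In types $(4,0,0)$ and $(3,1,0)$, some triple of variables of $h$ lies in a single $S_i$, producing a divisor in $G_1$.  In type $(2,2,0)$, with pairs in $S_i$ and $S_j$ where $j\equiv i\pm1\pmod 3$, choosing the orientation in which $j=i+1$ and taking the $S_i$-pair together with either $S_j$-variable gives a triple in $G_2$.  In type $(2,1,1)$ the doubled variables sit in some $S_i$ and the singletons occupy $S_{i+1}$ and $S_{i-1}$; combining the $S_i$-pair with the $S_{i+1}$-variable gives a generator in $G_2$.

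The step I expect to require the most care is the counting inequality $\#G_1+\#G_2\le\tfrac12\binom{n}{3}$ underpinning Step 3: it looks mild but must be verified uniformly across the three residue classes $n\bmod 3$ to guarantee the existence of $G_3$.  The remaining pigeonhole analyses at degrees $2$ and $4$ are routine once one carefully tracks the cyclic asymmetry built into $G_2$.
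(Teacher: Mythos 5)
Your proof is correct and follows essentially the same route as the paper: both arguments reduce to showing $G(I)$ is a perfect set (equivalently $A_2=A_4=\emptyset$ with $\#C_3=\tfrac12\binom{n}{3}$) by classifying squarefree monomials of degrees $2$ and $4$ according to their sorted distribution among the classes $S_1,S_2,S_3$ and matching each type to $G_1$ or $G_2$. The one substantive addition on your side is the verification that Step 3 is feasible, i.e.\ $\#G_1+\#G_2\le\tfrac12\binom{n}{3}$, which the paper asserts only implicitly; your reduction to $\sum_i\binom{s_i}{3}+\sum_i\binom{s_i}{2}(s_{i+1}-s_{i-1})\le s_1s_2s_3$ is sound (the middle sum in fact vanishes for the near-equal class sizes arising here), and making this explicit is a genuine improvement in completeness.
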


\begin{proof}
We appeal to Theorem \ref{thm.PerfectCharacterization}. To see $I$ is lower perfect note that in $\M_{n,2}$ monomials in $\mathcal{A}(2,0,0)$ divide some element of $G_{1}$, and monomials in $\mathcal{A}(1,1,0)$ divide some element of $G_2$. To see that $I$ is upper perfect note that in $\M_{n,4}$ all monomials in $\mathcal{A}(4,0,0)\cup\mathcal{A}(3,1,0)$ are divided by an element of $G_{1}$ and all monomials in $\mathcal{A}(2,2,0)\cup\mathcal{A}(2,1,1)$ are divided by an element of $G_{2}$. By construction the minimal number of generators of $I$ is $\frac{1}{2}{n \choose 3}$.
\end{proof}

\begin{example} For $n = 9$ and $d = 3$, an $f$-ideal must have precisely $\frac{1}{2}{9 \choose 3} = 42$ generators. We construct the sets
\vspace{-.1cm}
$$S_1 = \{x_1, x_4, x_7\}\;\;\; S_2 = \{x_2, x_5, x_8\}\;\;\; S_3 = \{x_3, x_6, x_9\}$$
\vspace{-.4cm}
\[
\begin{array}{llll}
&G_{1}= &\{x_1 x_4 x_7, x_2 x_5 x_8, x_3 x_6 x_9\}\\
&G_{2} = &\{x_1 x_4 x_2, x_1 x_4 x_5, x_1 x_4 x_8, x_1 x_7 x_2, x_1 x_7 x_5, x_1 x_7 x_8, x_4 x_7 x_2, x_4 x_7 x_5, x_4 x_7 x_8, x_2 x_5 x_3,\\
& &x_2 x_5 x_6, x_2 x_5 x_9, x_2 x_8 x_3, x_2 x_8 x_6, x_2 x_8 x_9, x_5 x_8 x_3, x_5 x_8 x_6, x_5 x_8 x_9, x_3 x_6 x_1, x_3 x_6 x_4,\\
& &x_3 x_6 x_7, x_3 x_9 x_1, x_3 x_9 x_4, x_3 x_9 x_7, x_6 x_9 x_1, x_6 x_9 x_4, x_6 x_9 x_7\}
\end{array}
\]

Note that $\#G_{1} = 3$ and $\#G_{2} = 27$. Hence we must select an additional $12$ generators not in $G_1$ or $G_2$ to construct $G_{3}$. These can be chosen or randomly generated. For example, let \[G_3 = \{x_1 x_2 x_3, x_1 x_2 x_6, x_1 x_2 x_9, x_1 x_5 x_3, x_1 x_5 x_6, x_1 x_5 x_9, x_1 x_8 x_3, x_1 x_8 x_6, x_1 x_8 x_9, x_4 x_5 x_6, x_4 x_5 x_9, x_7 x_8 x_9 \}\] Then $I = \left<G_{1}\cup G_{2}\cup G_{3}\right>$ is an $f$-ideal.
\end{example}

\begin{algorithm}
	\label{alg.deg4}
Let $d = 4$ and $n \ge 16$.
\begin{enumerate}[align=left]
\item[\textsc{Step 1:}] Set $G_1 = \{x_t x_u x_v x_w ~\big|~ \{x_t, x_u\} \subset S_i, \{x_v\} \subset S_{i+1}, \{x_w\} \subset S_{i+2}, i\in[4]\}$
\item[\textsc{Step 2:}] Set $G_2 = \{x_t x_u x_v x_w ~\big|~ \{x_t, x_u\} \subset S_i, \{x_v, x_w\} \subset S_j, 1 \leq i < j \leq 4 \}$
\item[\textsc{Step 3:}] Set $G_3 = \{x_t x_u x_v x_w ~\big|~ \{x_t, x_u, x_v\} \subset S_i, \{x_w\} \subset s_{i-1}, i\in[4] \}$
\item[\textsc{Step 4:}] Set $G_4 = \{ x_t x_u x_v x_w ~\big|~ \{x_t, x_u, x_v, x_w\} \subset S_i, i\in[4] \}$
\item[\textsc{Step 5:}]
Select $\frac{1}{2}{n \choose 4}-\sum_{i=1}^{4}\#G_{i}$ monomials from $\M_{n,4}\backslash(G_{1}\cup\cdots\cup G_{4})$ and call this set $G_{5}$.
\item[\textsc{Step 6:}] Output $I=\left<G_{1}\cup\cdots\cup G_{5}\right>$
\end{enumerate}
\end{algorithm}

\begin{theorem} The output from Algorithm \ref{alg.deg4} is an $f$-ideal.
\end{theorem}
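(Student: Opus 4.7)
The plan is to appeal to Theorem \ref{thm.PerfectCharacterization} and show that $G(I)$ is a perfect set of size $\tfrac{1}{2}\binom{n}{4}$; the cardinality is built into Step 5, so the content lies in verifying both lower and upper perfection. I would classify squarefree monomials by their type $(\alpha_{1},\alpha_{2},\alpha_{3},\alpha_{4})$ recording how many variables fall in each class $S_i$ (up to reordering), which is precisely the partitioning around which the algorithm was designed.

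For lower perfection there are only three types of degree-$3$ squarefree monomial: $(3,0,0,0)$, $(2,1,0,0)$, and $(1,1,1,0)$. A $(3,0,0,0)$ monomial extends, by any fourth variable in the same class $S_i$, to an element of $G_4$. A $(2,1,0,0)$ monomial with doubled class $S_i$ extends to an element of $G_1$ when the singleton lies in $S_{i+1}$ or $S_{i+2}$, and to an element of $G_3$ (by supplying a third variable from $S_i$) when the singleton lies in $S_{i-1}$. A $(1,1,1,0)$ monomial missing class $S_j$ extends to an element of $G_1$ by doubling a variable from class $S_{j+1}$. In every case the required class is large enough since $n \ge 16$ forces $|S_i| \ge 4$ for all $i$.

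Upper perfection is handled similarly over the six types of degree-$5$ squarefree monomial. The easy types $(5,0,0,0)$, $(4,1,0,0)$, $(3,2,0,0)$, and $(2,2,1,0)$ yield elements of $G_4$, $G_4$, $G_2$, $G_2$ respectively by deleting a suitable variable. For a $(3,1,1,0)$ monomial with the triple in $S_i$, the two singleton classes lie in $\{S_{i+1},S_{i+2},S_{i-1}\}$, and one checks that either the singletons occupy $\{S_{i+1},S_{i+2}\}$ (landing in $G_1$ after removing a variable from $S_i$) or at least one singleton lies in $S_{i-1}$ (landing in $G_3$ after removing the other singleton). For $(2,1,1,1)$ with the double in $S_i$ and singletons filling the other three classes, removing the singleton in $S_{i-1}$ leaves exactly the $G_1$ pattern with doubled class $S_i$ and singletons in $S_{i+1},S_{i+2}$.

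Finally I would confirm Step 5 is feasible, namely $\sum_{i=1}^{4}\#G_i \le \tfrac{1}{2}\binom{n}{4}$ so that $\M_{n,4}\setminus(G_1\cup\cdots\cup G_4)$ has enough monomials to furnish $G_5$. Writing each $\#G_i$ as a polynomial in the sizes $|S_1|,\dots,|S_4|$ and comparing leading terms against $\tfrac{1}{2}\binom{n}{4}$ reduces this to a routine inequality that holds for $n \ge 16$; since $G_5$ only adds further generators and cannot spoil either direction of perfection established above, the particular choice of $G_5$ is immaterial. The main obstacle is less a single hard step than the combinatorial bookkeeping: the cyclic mod-$4$ structure of the class indices is what makes the patterns $(3,1,1,0)$ and $(2,1,1,1)$ nontrivial, and what justifies the specific cyclic shifts appearing in the definitions of $G_1$ and $G_3$.
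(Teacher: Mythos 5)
Your approach matches the paper's: appeal to Theorem \ref{thm.PerfectCharacterization} and verify lower and upper perfection by running through the possible class-profiles $\mathcal{A}(\alpha_1,\dots,\alpha_4)$ in degrees $3$ and $5$. Your argument is in fact somewhat more careful than the printed proof: for the degree-$5$ check the paper lists $\mathcal{A}(2,1,1,0)$ and $\mathcal{A}(1,1,1,1)$ (which are degree-$4$ profiles and are evidently typos for $\mathcal{A}(3,1,1,0)$ and $\mathcal{A}(2,1,1,1)$) and omits the profile $\mathcal{A}(2,2,1,0)$ altogether, whereas you enumerate all six degree-$5$ profiles and handle $(2,2,1,0)$ correctly via $G_2$; you also cover $(2,1,0,0)$ in lower perfection via $G_1$ or $G_3$ while the paper routes it through $G_2$ (both work), and you flag the feasibility of Step~5 (that $\sum_i \#G_i \le \tfrac12\binom{n}{4}$), which the paper leaves implicit.
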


\begin{proof}

We again appeal to Theorem \ref{thm.PerfectCharacterization}. To see $I$ is lower perfect note that in $\M_{n,3}$ monomials in $\mathcal{A}(3,0,0,0)$ divide some element of $G_{4}$, and monomials in $\mathcal{A}(2,1,0,0)$ divide some element of $G_{2}$ and monomials in $\mathcal{A}(1,1,1,0)$ divide some element of $G_{1}$. To see that $I$ is upper perfect note that in $\M_{n,5}$ all monomials in $\mathcal{A}(5,0,0,0)\cup\mathcal{A}(4,1,0,0)$ are divided by an element of $G_{4}$, all monomials in $\mathcal{A}(2,1,1,0)$ are divided by either $G_{1}$ or $G_{3}$, all monomials in $\mathcal{A}(1,1,1,1)$ are divided by an element of $G_{1}$, all monomials in $\mathcal{A}(3,2,0,0)$ are divided by elements of $G_{2}$. By construction the minimal number of generators of $I$ is $\frac{1}{2}{n \choose 4}$.
\end{proof}

\begin{algorithm}
	\label{alg.deg5}
Let $d=5$ and $n \ge 25$.
\begin{enumerate}[align=left]
\item[\textsc{Step 1:}] Set $G_{1} = \{ x_s x_t x_u x_v x_w ~\big|~ x_s, x_t\in S_i, x_u\in S_{i+1}, x_v\in S_{i+2}, x_w\in S_{i+3}, i\in[5] \}$
\item[\textsc{Step 2:}] Set $G_{2} = \{ x_s x_t x_u x_v x_w ~\big|~ x_s, x_t\in S_i, x_u, x_v\in S_{i-1}, x_w\in S_{i-2}, i\in[5] \}$
\item[\textsc{Step 3:}] Set $G_3 = \{ x_s x_t x_u x_v x_w ~\big|~ x_s, x_t\in S_i, x_u, x_v\in S_{i+2}, x_w\in S_{i+4}, i\in[5] \}$
\item[\textsc{Step 4:}] Set $G_4 = \{ x_s x_t x_u x_v x_w ~\big|~ x_s, x_t, x_u\in S_i, x_v\in S_{i-1}, x_w\in S_{i-3}, i\in[5] \}$
\item[\textsc{Step 5:}] Set $G_5 = \{ x_s x_t x_u x_v x_w ~\big|~ x_s, x_t, x_u\in S_i, x_v\in S_{i+1}, x_w\in S_{i+3}, i\in[5] \}$
\item[\textsc{Step 6:}] Set $G_6 = \{ x_s x_t x_u x_v x_w ~\big|~ x_s, x_t, x_u\in S_i, x_v,x_w\in S_{i-1}, i\in[5]\}$
\item[\textsc{Step 7:}] Set $G_7 = \{ x_s x_t x_u x_v x_w ~\big|~ x_s, x_t, x_u\in S_i, x_v,x_w\in S_{i-3}, i\in[5]\}$
\item[\textsc{Step 8:}] Set $G_8 = \{ x_s x_t x_u x_v x_w ~\big|~ x_s, x_t, x_u, x_v\in S_i, x_w\in S_{i+1}, i\in[5] \}$
\item[\textsc{Step 9:}] Set $G_9 = \{ x_s x_t x_u x_v x_w ~\big|~ x_s, x_t, x_u, x_v\in S_i, x_w\in S_{i+3}, i\in[5] \}$
\item[\textsc{Step 10:}] Set $G_{10} = \{ x_s x_t x_u x_v x_w ~\big|~ x_s, x_t, x_u, x_v, x_w\in S_i, i\in[5] \}$
\item[\textsc{Step 11:}] Select $\frac{1}{2}{n \choose 5}-\sum_{i=1}^{10}\#G_{i}$ monomials from $\M_{n,5}\backslash(G_{1}\cup\cdots\cup G_{10})$ and call this set $G_{11}$.
\item[\textsc{Step 12:}] Output $I=\left<G_{1}\cup\cdots\cup G_{11}\right>$
\end{enumerate}
\end{algorithm}

\begin{theorem} The output from Algorithm \ref{alg.deg5} is an $f$-ideal.
\end{theorem}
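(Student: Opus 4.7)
The plan is to apply Theorem~\ref{thm.PerfectCharacterization}: since the choice of $G_{11}$ in Step~11 ensures that the total number of minimal generators of $I$ equals $\frac{1}{2}\binom{n}{5}$, it suffices to prove that $G(I)$ is a perfect subset of $\M_{n,5}$. Because $G_{11}$ only serves to pad the count, the heart of the matter is to show that $G_1 \cup \cdots \cup G_{10}$ already witness both lower and upper perfection. The verification will be organized as a case analysis on the unsorted $a$-vector of the test monomial $m \in \M_{n,4}$ (for lower perfection) or $m \in \M_{n,6}$ (for upper perfection), further refined by the cyclic positions in $\ZZ_5$ of the classes $S_i$ meeting $\supp(m)$.

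For lower perfection one inspects the five possible $a$-vectors in degree $4$. The types $(4,0,0,0,0)$ and $(2,2,0,0,0)$ are immediate from $G_{10}$ and from $G_6$ or $G_7$ (according to whether the two supporting classes are adjacent or at cyclic distance $2$). The type $(3,1,0,0,0)$ splits into four subcases according to the signed offset of the $1$-class from the $3$-class, matched respectively by $G_8, G_6, G_7, G_9$. The type $(1,1,1,1,0)$ extends into $G_1$, since any four classes in $\ZZ_5$ are four consecutive and one adjoins a variable in the unique ``leftmost'' supporting class. The crucial case is $(2,1,1,0,0)$: the three supporting classes are either three consecutive (matched by $G_1$ or $G_2$ depending on whether the $2$-class sits at the left, middle, or right of the triple) or of pair-plus-opposite type (matched by one of $G_3, G_4, G_5$ depending on whether the $2$-class is one pair element, the other pair element, or the opposite element).

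For upper perfection one inspects each of the ten degree-$6$ $a$-vectors. The types $(6,0,0,0,0)$, $(5,1,0,0,0)$, $(3,3,0,0,0)$, and $(3,2,1,0,0)$ are immediate via $(5,0,0,0,0)$- or $(3,2,0,0,0)$-subsets covered by $G_{10}, G_6, G_7$. For $(4,2,0,0,0)$ and $(2,2,2,0,0)$ one extracts a $(4,1,0,0,0)$- or $(2,2,1,0,0)$-subset matched by one of $G_2, G_3, G_6, G_7, G_8, G_9$. The types $(3,1,1,1,0)$ and $(2,2,1,1,0)$ involve four of the five classes (hence a unique missing class in $\ZZ_5$), and a direct case-by-case check shows that the right choice of drop produces either a $(2,1,1,1,0)$-subset in $G_1$ or a $(3,1,1,0,0)$ or $(2,2,1,0,0)$-subset in one of $G_2, \ldots, G_5$. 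The most intricate case is $(4,1,1,0,0)$, which splits into the six unordered pairs $\{b, c\} \subseteq \ZZ_5 \setminus \{a\}$ and requires combining $(4,1,0,0,0)$-subsets (covered by $G_8, G_9$) with $(3,1,1,0,0)$-subsets (covered by $G_4, G_5$); in particular, the ``awkward'' triple $\{a, a-1, a+2\}$ is covered precisely because it is the class set of $G_4$ with parameter $i = a$. Finally, $(2,1,1,1,1)$ divides a $G_1$ generator after dropping the variable in the class immediately preceding the $2$-class.

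The main obstacle is simply the bookkeeping: while each sub-case is a short direct check, one must enumerate around a dozen configurations in degree $6$, several of which split further by cyclic offset in $\ZZ_5$. The asymmetric offsets built into $G_4, G_5, G_7, G_9$ are tuned so that every three- or four-class pattern of $\ZZ_5$ arises as the class set of some $G_j$; conceptually, the ten families exhaust the $(a$-vector, cyclic offset$)$ combinations needed to certify both perfection conditions, whereupon Theorem~\ref{thm.PerfectCharacterization} yields that $I$ is an $f$-ideal.
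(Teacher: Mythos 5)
Your proposal takes essentially the same approach as the paper: invoke Theorem~\ref{thm.PerfectCharacterization}, then verify lower and upper perfection by a case analysis on the $a$-vectors of monomials in $\M_{n,4}$ and $\M_{n,6}$, showing each type divides or is divided by a generator from the appropriate $G_j$. Your sub-case breakdown is more detailed than the paper's terse case-list (and in fact is more careful: for $\mathcal{A}(4,1,1,0,0)$ your set $G_4\cup G_5\cup G_8\cup G_9$ actually covers all six offset patterns, whereas the paper's listing $G_4\cup G_7\cup G_8$ appears to contain a slip, since $G_7$ has $a$-vector $(3,2,0,0,0)$, which cannot arise by deleting one variable from a $(4,1,1,0,0)$ monomial), but the argument is the same.
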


\begin{proof}

We again appeal to Theorem \ref{thm.PerfectCharacterization}. To see $I$ is lower perfect note that in $\M_{n,4}$ we note that monomials in $\mathcal{A}(1,1,1,1,0)$ divide some element of $G_{1}$, monomials in $\mathcal{A}(2,1,1,0,0)$ divide some element of $G_{1}\cup G_{2}\cup G_{3}$, monomials in $\mathcal{A}(2,2,0,0,0)$ divide some element of $G_{2}\cup G_{3}$, monomials in $\mathcal{A}(3,1,0,0,0)$ divide some element of $G_{4}\cup G_{5}$, and monomials in $\mathcal{A}(4,0,0,0,0)$ divide some element of $G_{10}$. To see that $I$ is upper perfect note that in $\M_{n,6}$ monomials in $\mathcal{A}(2,1,1,1,1)$ are divided by some element of $G_{1}$, monomials in $\mathcal{A}(2,2,1,1,0)\cup\mathcal{A}(3,1,1,1,0)$ are divided by some element of $G_{1}\cup G_{2}\cup G_{3}\cup G_{4}\cup G_{5}$, monomials in $\mathcal{A}(2,2,2,0,0)$ are divided by some element in $G_{2}\cup G_{3}$, monomials in $\mathcal{A}(3,2,1,0,0)$ are divided by some element of $G_{2}\cup G_{3}\cup G_{4}\cup G_{5}$, monomials in $\mathcal{A}(4,1,1,0,0)$ are divided some element of $G_{4}\cup G_{7}\cup G_{8}$, monomials in $\mathcal{A}(5,1,0,0,0)\cup\mathcal{A}(4,2,0,0,0)\cup\mathcal{A}(3,3,0,0,0)$ are divided by some element in $G_{6}\cup G_{7}\cup G_{8}\cup G_{9}\cup G_{10}$, and monomials in $\mathcal{A}(6,0,0,0,0)$ are divided by some element of $G_{10}$. By construction the minimal number of generators of $I$ is $\frac{1}{2}{n \choose 5}$.

\end{proof}

We end the paper with an example and some questions. The following example of a mixed $f$-ideal in three degrees (4,5,6) was discovered by Adam Van Tuyl: Let $R = \mathbb{K}[x_1..x_8]$ and define an ideal $I\subset R$
\begin{align*}
I = &(x_{1}x_{2}x_{3}x_{8},x_{2}x_{3}x_{5}x_{8}, x_{3}x_{4}x_{5}x_{8}, x_{1}x_{2}x_{4}x_{5}x_7,  x_{2}x_{3}x_{4}x_{5}x_{7}, x_{3}x_{4}x_{5}x_{8}, \\
& x_{2}x_{3}x_{6}, x_{1}x_{2}x_{4}x_{6},  x_{1}x_{2}x_{5}x_{6}, x_{1}x_{3}x_{4}x_{6}, x_{1}x_{3}x_{5}x_{6}, x_{1}x_{4}x_{5}x_{6}, x_{2}x_{3}x_{4}x_{6}, \\
& x_{2}x_{3}x_{5}x_{6}, x_{2}x_{4}x_{5}x_{6}, x_{6}x_{7}x_{8}, x_{5}x_{6}x_{7}x_{8}, x_{4}x_{6}x_{7}, x_{4}x_{7}x_{8}, x_{1}x_{5}x_{8}, \\
& x_{2}x_{4}x_{6}x_{8}, x_{4}x_{5}x_{6}x_{8}, x_{3}x_{4}x_{6}x_{8}, x_{2}x_{5}x_{6}x_{8}
 x_{1}x_{3}x_{4}x_{5}, x_{3}x_{5}x_{6}x_{8},  x_{3}x_{6}x_{7}x_{8}, \\
& x_{3}x_{7}x_{8}, x_{2}x_{7}x_{8}, x_{1}x_{6}x_{8},x_{2}x_{7}x_{8}, x_{2}x_{5}x_{6}x_{8})
\end{align*}
A quick check in Macaulay2 verifies that this is indeed an $f$-ideal. One may ask:
\begin{question}
Can we always find $f$-ideals generated in consecutive degrees? Could one create an algorithm to construct such $f$-ideals? Are there $f$-ideals generated in nonconsecutive degrees?
\end{question}
Our density result,Theorem \ref{thm.density}, assumes that d is fixed while $n\rightarrow \infty$ so one may also naturally ask:
\begin{question} \quad
 Is the asymptotic density of $\fideal(n,d)$, as $n\rightarrow \infty$, still 0 if we allow $d$ to vary with $n$?
\end{question}
Finally, we managed to come up with examples of $f$-ideal generated in degrees 2 and 3 which are seemingly unrelated. It seems likely that these are far from the only examples and so one could ask:
\begin{question}
 Is there a classification of mixed $f$-ideals in degree $2$ and $3$ similar to the one for pure $f$-ideals given in Theorem 3.5 \cite{deg2}?
\end{question}

%%%%%%%%%%%%%%%%%%%%%%%%%%%%%%%%%%%%%%%%%%%%%%%%%


\begin{thebibliography}{100}
\bibitem {deg2} Abbasi, G.Q., Ahmad, S., Anwar, I. and Baig, W.A., f-Ideals of degree 2. Algebra Colloquium 19 (2012), no. 1, 921-926.

\bibitem{degd} Anwar, I., Mahmood, H., Binyamin, M.A. and Zafar, M.K., On the Characterization of f-Ideals. Communications in Algebra, 42 (2014), no. 9 3736-3741.

\bibitem{Adam} Budd, S., Van Tuyl, A., Newton Complementary Dual of $f$-ideals. Canad. Math. Bull. 62 (2019), no. 2, 231-241.

\bibitem{faridi} Faridi, S., The Facet Ideal of a Simplicial Complex. Manuscripta Mathematica, 109 (2002), no. 2, 159-174.

\bibitem{Stanley} Francisco, C. A.,  Mermin, J., Schweig, J., A survey of Stanley-Reisner theory. Connections between algebra, combinatorics, and geometry, 209-234, Springer Proc. Math. Stat., 76, Springer, New York, 2014.

 \bibitem{tswu2}  Guo, J., Wu, T., On the $(n,d)^{th}$
 $f$-ideals, J. Korean Math. Soc. 52 (2015), no. 4, 685-697.

%\bibitem{tswu1} Guo, J., Wu, T. and Liu, Q., Perfect Sets and $ f $-Ideals. Preprint (2013), {\tt arXiv:1312.0324}.

\bibitem{tswuPubl} Guo, J., Wu, T. and Liu, Q., $f$-Graphs and $f$-Ideals. Communications in Algebra, 45 (2017), no.8, 3207-3220.
\bibitem{tswuPub3} Liu, A-M., Guo, J., Wu, T., The Cohen-Macaulay Property of $f$-ideals. Preprints (2020), {\tt arxiv:2010.04317v2}.

\bibitem{HHBINOM} Herzog, J. and Hibi, T., Binomial Ideals, Springer, 2018.

\bibitem{HH} Herzog, J. and Hibi, T., Monomial ideals, Springer, 2011.

\bibitem{fgraph} Mahmood, H., Anwar, I. and Zafar, M.K.,  A construction of Cohen-Macaulay $f$-graphs. J. Algebra Appl. 13 (2014), no. 6, 1450012, 7 pp.

\bibitem{fsimp}  Mahmood, H., Anwar, I., Binyamin, M.A. and Yasmeen, S., On the connectedness of $f$-simplicial complexes. J. Algebra Appl. 16 (2017), no. 1, 1750017, 9 pp.
\bibitem{fnote} Mahmood, H., Rehman, F.U., Binyamin, M.A., A Note on $f$-graphs. J. Algebra Appl. 19 (2020), no. 10, 2050193, 9 pp.

\bibitem{qfi1} Mahmood, H., Rehman, F.U., Nguyen, T.T., Binyamin, M.A., Quasi $f$-Ideals. Preprints (2020), {\tt arxiv:2009.04773v1}.
\bibitem{qfi2} Mahmood, H., Rehman, F.U., Nguyen, T.T., Binyamin, M.A., Quasi $f$-Simplicial
Complexes and Quasi $f$-Graphs. Preprints (2020) {\tt arxiv:2009.03641v1}.

\bibitem{StanleyBook} Stanley, R., Combinatorics and Commutative Algebra. 2nd Edition. Birkh\"auser, 1996.
\end{thebibliography}
\end{document}